\documentclass[12pt]{amsart}
\usepackage[left=2.25cm, right=2.25cm, top=2.6cm, bottom=2.5 cm]{geometry}

\usepackage{hyperref}
\usepackage{enumerate} 	%to be able to set roman numerals in enumerate enviroment: \begin{enumerate}[(i)]
	\usepackage{graphicx}	%\includegraphics and also \colorbox{red}{text}
	
	\usepackage{xcolor} 	%to set color of text: \color{blue}
	\usepackage{tabularx}	
	\usepackage{tcolorbox}	% \begin{tcolorbox} 
		\usepackage{caption}  	%to make captions for drawings without using figure - figure floats!
		\usepackage{tikz}

		\usepackage{kpfonts}  %LADNA CZCIONKA

		\usepackage[T1]{fontenc}	%always goes after font package! NONCOMMUTATIVE
		\usepackage[cp1250]{inputenc}
		\usepackage[polish,english]{babel}
		
		\newtheorem{lem}{Lemma}[section]
		\newtheorem{cor}[lem]{Corollary}
		\newtheorem{defi}[lem]{Definition}
		\newtheorem{thm}[lem]{Theorem}

		\theoremstyle{definition} % te sa pisame \rm
		
		\newtheorem{rk}[lem]{Remark}

		\makeatletter
		\def\th@plain{%
			\thm@notefont{}% same as heading font
			\itshape % body font
		}
		\def\th@definition{%
			\thm@notefont{}% same as heading font
			\normalfont % body font
		}
		\def\th@procedure{
			\thm@notefont{}
			\normalfont
			\addtolength{\leftskip}{2.5em} %indentation, this will indent all of the environment, including the heading
		}
		\makeatother

		\newcommand{\greencomment}[1]{}  %comment not showed
		\newcommand{\forget}[1]{}

		\newcommand{\id}{\textup{id}}

		\usepackage{amssymb}
		\newcommand{\R}{\mathbb{R}}

		\newcommand{\C}{\mathbb{C}}
		\newcommand{\K}{\mathbb{R}}

		\newcommand{\Ps}{\mathbb{P}}
		\newcommand{\Sing}{\mathrm{Sing}}
        \newcommand{\Res}{\mathrm{Res}}

		\author[Z. Jelonek]{Zbigniew Jelonek}
		\author[G. Menani]{Gustavo Menani}
		\author[M. Michalska]{Maria Michalska}
		
		\address{Z. Jelonek, Instytut Matematyczny, Polska Akademia Nauk, \'Sniadeckich 8, 00-656 Warszawa, Poland.}
		
		\email{najelone@cyf-kr.edu.pl}

        \address{G. Menani, Instituto de Ci\^encias Matem\'aticas e de Computa\c{c}\~ao, Universidade de S\~ao Paulo, S\~ao Carlos, SP, Brazil.}

        \email{gmenani@usp.br}

        \address{M. Michalska, Instituto de Ci\^encias Matem\'aticas e de Computa\c{c}\~ao, Universidade de S\~ao Paulo, S\~ao Carlos, SP, Brazil.}
        \email{maria.michalska@wmii.uni.lodz.pl}

         \thanks{The first named author was partially supported by the grant of Narodowe Centrum Nauki, number 2024/55/B/ST1/01412 and by the S\~ao Paulo Research Foundation (FAPESP) under grant number 2025/06706-0. The second named author was supported by FAPESP under grant number 2025/24335-9. The third named author was supported by FAPESP under grant number 2024/04171-9.}

		\title[Lipschitz embeddings of algebraic curves]{\textbf{Lipschitz embeddings of algebraic curves}}
		
\begin{document}

			\maketitle

			\begin{abstract}
				We show that inner Lipschitz classification of real algebraic curves is equivalent to outer Lipschitz classification in $\R^3$ of real algebraic curves with LNE connected components. Moreover, for every real affine algebraic curve we find its LNE model which is real birational to it.  Lastly, we show that this claim does not hold in the complex category.
			\end{abstract}
			
%			\tableofcontents

	%%%%%%%%%%%%
	% THINGS TO REMEMBER BELOW
	%%%%%%%%%%%%		

%			
%			Throughout the paper let $\K=\R$ or $\C$. When we say definable we mean the structure is fixed...
%			
%			\todo{citations to be added}
			
			\greencomment{Let $f:X\to  M$ be a bijection onto image $Y=f(X)$ with $M$ a topological space (resp. manifold). Let  us transplant the   topological (resp.  smooth) structure on $X$ onto $Y$ by means of $f$. Note that $f$ is a topological (resp. smooth) embedding if and only if $\id:Y\to Y$ is a homeomorphism (resp. diffeomorphism) between $Y$ considered with the transplant and $Y$ with structure inherited from the ambient space. Therefore above definition of Lipschitz embedding as an LNE set is natural.}

			\section{Introduction}
			
			There is recently focused interest in metric geometry of algebraic sets, especially Lipschitz classification problems. %, \todo{see for instance \cite{}.}
			Since the result of Birbrair and Mostowski in \cite{birbrairmostowski} has been extended to non-compact case by \cite{AndreVincentMMOnePoint}, one can classify affine  algebraic sets %(real and complex)
			with respect to inner Lipschitz equivalence by means of their LNE models, i.e. sets which are LNE and inner bi-Lipschitz to the initial sets. This means that the inner Lipschitz classification of affine algebraic sets is equivalent to outer Lipschitz classification of a certain class of semialgebraic sets. The tools for outer classification are much stronger and are applied in a restrictive family of sets, thus this approach is very promising.
			
			The essential question is the following: is inner Lipschitz classification of connected algebraic sets equivalent to outer Lipschitz classification of LNE sets which are also algebraic?
			
			In this paper we give the first step for this approach as we answer this question in case of curves. The answer is affirmative in the case of real algebraic curves and negative in the case of complex algebraic curves. In particular, Corollary~\ref{cor:classification} shows that the inner Lipschitz classification of real algebraic curves is equivalent to the outer Lipschitz classification in $\R^3$ of real algebraic curves with LNE connected components.

			\section{Preliminaries}
			
			\subsection{LNE models of sets}

			Throughout this subsection let $(M,d)$ and $(N,d')$ be  metric spaces and let $X\subset M$ and $Y\subset N$.

			\begin{defi}
				A function $f:(M,d)\to (N,d')$ %between metric spaces  
				is said to be \emph{bi-Lipschitz with constant $\dfrac{1}{L}$} if for all $x,y\in M$ we have
				$$
				\dfrac{1}{L}d(x,y)\leq d'(f(x),f(y))\leq Ld(x,y).
				$$	
			\end{defi}
			
			A subset $X$ of a metric space can be equipped with the inner (length) metric~$d_X$ given by infimum on lengths of continuous curves connecting given points as well as with the outer metric which is the restriction of the ambient metric to the set (and so we will simply denote the outer metric the same as the ambient metric). %Note that for connected definable sets the inner metric is a well-defined distance function. %we do not demand that inner metric is well-defined, the following are true without this

			\begin{defi}
				A  set $X\subset M$ is said to be \textit{Lipschitz normally embedded (LNE)} in $M$ if $$\id:(X,d)\to (X,d_X)$$ is bi-Lipschitz.
			\end{defi}
			
			This definition in particular implies that the inner metric on an LNE set $X$ is a well-defined distance function and all points in $X$ can be joined by a curve of finite length.

			\begin{defi}
				We say that $f:X\to Y$ is \emph{outer bi-Lipschitz} if it is bi-Lipschitz with respect to outer metrics on $X$ and $Y$ and we say $f$ is \emph{inner bi-Lipschitz} if it is bi-Lipschitz with respect to inner metrics.
			\end{defi}
			
			Note that  if two sets $X$ and $Y$ are outer bi-Lipschitz, then they are also inner bi-Lipschitz. %Therefore,
			In particular an outer bi-Lipschitz image of an LNE set is LNE.

			\begin{defi}[LNE models]
				Let  $f:X\to N$.
				When $Y:=f(X)$ is LNE and $f:(X,d_X)\to (Y,d_Y)$ is inner bi-Lipschitz, we say that $Y$ is an LNE model of $X$ in $N$.
			\end{defi}

\subsection{Crucial results}

Let us first recall the result of \cite{AndreVincentMMOnePoint} that we will use a few times throughout this paper, independently proved also in \cite{SAMPAIO}.
\begin{thm}[\cite{AndreVincentMMOnePoint}]
	\label{thm:CGMLNE}
	Consider a closed connected definable set in $\R^n$ that does not contain the origin. The following are equivalent:
	\begin{enumerate}
		\item the set is LNE in $\R^n$;
		\item its one-point compactification is LNE in the one-point compactification $\mathbb{S}^n$ of $\R^n$ considered with the natural bounded metric on $\mathbb{S}^n$;
		\item its image by inversion $x\to \frac{x}{\|x\|^2}$ is LNE in $\R^n$.
	\end{enumerate}
\end{thm}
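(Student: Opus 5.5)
The plan is to prove the cycle (1)$\Rightarrow$(3)$\Rightarrow$(2)$\Rightarrow$(1). Two facts do most of the work: the inversion $\iota(x)=x/\|x\|^2$ is a definable involution of $\R^n\setminus\{0\}$, and stereographic projection identifies $\R^n\cup\{\infty\}$ with $\mathbb{S}^n$ so that $\iota$ becomes a reflection, which is an isometry of $\mathbb{S}^n$. Write $X$ for the set and $\widehat X=X\cup\{\infty\}$. Since $X$ is closed and $0\notin X$, there is $r>0$ with $X\cap B(0,r)=\emptyset$. Hence $\iota(X)$ is bounded, and its closure is $\iota(X)\cup\{0\}$ when $X$ is unbounded. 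Condition (3) should be read for this closure, equivalently for the compact set $\widehat{\iota(X)}$, since $\iota(X)$ alone is not closed.

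\emph{Step 1: (2)$\Leftrightarrow$(3).} On the bounded region $\iota(X)\subset B(0,1/r)$, the stereographic chart $\sigma:\R^n\to\mathbb{S}^n$ is bi-Lipschitz for both the outer and the inner metrics, because its differential is uniformly comparable to an isometry on compact sets. The reflection $\rho=\sigma\circ\iota\circ\sigma^{-1}$ is an isometry of $\mathbb{S}^n$ that maps $\sigma(\widehat X)$ onto $\sigma(\overline{\iota(X)})$. The LNE property is preserved under isometries and under maps that are bi-Lipschitz for both metrics. Therefore (2) and (3) are equivalent.

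\emph{Step 2: (1)$\Leftrightarrow$(2).} Fix a large $R$ and split $\widehat X$ into two pieces. The bounded piece is $X\cap \overline{B}(0,2R)$. On it, the metric of $\mathbb{S}^n$ is bi-Lipschitz equivalent to the Euclidean metric, for both the outer and the inner metrics. The only subtlety is that inner paths might leave the ball, but such paths are long and can be handled uniformly. The other piece is the neighbourhood of infinity, which after inversion is a neighbourhood of $0$ in $\overline{\iota(X)}$. By Step 1 it suffices to compare LNE of $X$ near infinity with LNE of $\overline{\iota(X)}$ near $0$.

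The key tool here is definability, via the Kurdyka--Orro pancake (or L-regular) decomposition. Near $0$, the germ of $\overline{\iota(X)}$ is a finite union of LNE pieces. LNE of the whole germ is then equivalent to a condition on pairs of arcs: for any two definable arcs $\gamma_1,\gamma_2$ in the set, parametrised by distance to $0$, the inner distance $d_X(\gamma_1(t),\gamma_2(t))$ must be $\lesssim \|\gamma_1(t)-\gamma_2(t)\|$. This is the arc criterion of Birbrair--Mendes. The analogous criterion holds at infinity, using arcs parametrised by $\|x\|\to\infty$.

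\emph{Step 3: transport the arc criterion.} Under $\iota$, for $\|x\|,\|y\|$ comparable to $s$ we have
$$\|\iota(x)-\iota(y)\|=\frac{\|x-y\|}{\|x\|\,\|y\|}\approx \frac{\|x-y\|}{s^2},$$
and $|d\iota|=\|x\|^{-2}$. Inner lengths of paths staying in the shell $\|x\|\approx s$ therefore scale by the same factor $s^{-2}$. So arc-wise inner/outer comparability at infinity corresponds exactly to the same comparability at $0$, provided near-optimal inner paths can be taken inside a shell $\{s/C\le\|x\|\le Cs\}$.

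I expect this last point to be the main obstacle. A short inner path in $X$ could wander toward the origin or out to infinity, where the scaling is different. To control it, I would use the fact that $\|x\|$ is $1$-Lipschitz along paths. If the path leaves the shell $\{s/2\le\|x\|\le 2s\}$, its length is at least $s/2$. When $\|x-y\|\ll s$ this contradicts near-optimality, because the arc criterion together with the LNE hypothesis gives a competing path of length $O(\|x-y\|)$. When $\|x-y\|\gtrsim s$, connectedness and the connection of arcs to a base point give a path of length $O(s)$, which can be arranged to lie in the shell by using a definable "conical" structure at infinity. For this I would invoke the local conic structure of definable germs at $0$ after inversion.

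Combining the bounded part with the part near infinity by a standard gluing argument for finitely many LNE pieces finishes the proof.
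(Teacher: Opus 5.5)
The paper gives no proof of this statement; it is quoted from \cite{AndreVincentMMOnePoint} (see also \cite{SAMPAIO}), so your plan is judged on its own merits. Several pieces are sound:
\begin{enumerate}
\item You are right to read (3) for $\overline{\iota(X)}=\iota(X)\cup\{0\}$. For the line $X=\{(t,1):t\in\R\}\subset\R^2$, the set $\iota(X)$ is a circle minus the point $0$, which is not LNE, although $X$ is.
\item Step~1 is correct. Stereographic projection conjugates $\iota$ to a reflection of $\mathbb{S}^n$, and $\sigma$ is outer, hence inner, bi-Lipschitz on $\overline{B}(0,1/r)\supset\overline{\iota(X)}$.
\item The shell transport is correct for pairs with $\|x-y\|\le c\max(\|x\|,\|y\|)$, where $c$ is small compared with the LNE constant of the hypothesis. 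The path supplied by the hypothesis then stays in the shell, and the conformal factor $\|z\|^{-2}$ is essentially constant there.
\end{enumerate}

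The gap is in the complementary pairs, those with $\|x-y\|\ge c\max(\|x\|,\|y\|)$. These include every pair whose norms differ by a factor $2$.

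Your claim that a path of length $O(s)$ can be arranged inside the shell is false. For the line above, $x=(s,1)$ and $y=(-s,1)$ lie in different components of $X\cap\{s/2\le\|z\|\le 2s\}$, and no conic structure changes that. A path that leaves the shell cannot be transported either. The only path from $x$ to $y$ in $X$ passes through $(0,1)$, and its image under $\iota$ has length close to $\pi$, whereas $\|\iota(x)-\iota(y)\|\approx 2/s$.

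The missing idea is that these pairs should not be transported at all; each direction is checked directly.
\begin{enumerate}
\item For $(1)\Rightarrow(3)$, join $\iota(x)$ and $\iota(y)$ through the point $0\in\overline{\iota(X)}$, i.e.\ through $\infty$. This uses the fact that the definable germ $(\overline{\iota(X)},0)$ satisfies $d_{\overline{\iota(X)}}(u,0)\le C|u|$. That suffices, because here $\|\iota(x)-\iota(y)\|=\|x-y\|/(\|x\|\|y\|)\ge c\max(|\iota(x)|,|\iota(y)|)$.
\item For $(3)\Rightarrow(1)$, use directly in $X$ the linear growth $d_X(x,x_0)\le C(1+\|x\|)$, which gives $d_X(x,y)=O(\|x-y\|)$ for these pairs.
\end{enumerate}
Both are uniform estimates that come from definability, e.g.\ a definable metric equivalent to the inner one combined with curve selection. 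For a single arc they are elementary: if $\gamma(t)=t\theta(t)$ with $\|\theta\|=1$, then $\theta$ has finite length, so $t\|\theta'(t)\|$ stays bounded and $\gamma$ has length $O(t)$ up to radius $t$.

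Finally, the ``arc criterion at infinity'' you invoke is not a standard tool, and it is essentially as deep as the statement itself. If you want to work with arcs, use Birbrair--Mendes only at $0$, for $(1)\Rightarrow(3)$; there, two arcs with distinct tangents are handled by running along both into $0$. Then argue $(3)\Rightarrow(1)$ directly as above.
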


We will also use the crucial method of extending regular  isomorphisms.
\begin{thm}[\cite{Jelonek2008}]\label{thm:JelonekAuto}
Let $X$ be a nonsingular %(not necessarily connected) 
closed subvariety of $\mathbb{R}^n$
of dimension (not necessarily pure) $k$. Let $f = (p_1/q_1, ..., p_n/q_n) : X \to \mathbb{R}^n$ be a regular embedding. If $n \ge 4k + 2$, then there exists a tame regular isomorphism $F =
(P_1/Q_1, ..., P_n/Q_n) : \mathbb{R}^n \to \mathbb{R}^n$ such that $F_{|X} = f.$
\end{thm}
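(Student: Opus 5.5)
The plan is to build $F$ as a composition of two triangular (hence tame) regular automorphisms, after a linear change of coordinates on the source and on the target. Write $n=m+r+m$ with $m=2k+1$, which is possible because $n\ge 4k+2$. Use coordinates $(u,v,w)\in\R^m\times\R^r\times\R^m$. Put $Y:=f(X)$; it is a closed nonsingular subvariety of dimension $k$, since $f$ is a regular embedding.

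\textbf{Step 1 (generic projections).} Since $m\ge 2k+1$, a generic linear projection $\R^n\to\R^m$ restricts to an injective immersion on the $k$-dimensional nonsingular set $X$. After a small perturbation one can also make it proper on $X$ (avoid the directions at infinity of $X$, which form a set of dimension $\le k-1$ in the hyperplane at infinity). So we may choose a linear automorphism $A$ of $\R^n$ such that, in the new coordinates, $u|_X$ is a regular isomorphism of $X$ onto a closed subvariety $X_u\subset\R^m$. Likewise we choose a linear automorphism $B$ such that $w|_{B(Y)}$ is a regular isomorphism onto a closed $Y_w\subset\R^m$. Replacing $X$ by $A(X)$ and $f$ by $B\circ f\circ A^{-1}$, it suffices to extend this new embedding; we then conjugate back by linear maps, which are tame. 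The real subtlety here is the closedness and properness of the projected images, so that their inverse maps are regular on closed subvarieties. I expect this to be the main obstacle, and it is exactly where the dimension count enters.

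\textbf{Step 2 (extension of regular functions).} We need the following fact. A regular function on a closed real algebraic subvariety $Z\subset\R^m$ extends to a regular function on $\R^m$. Locally it is $p/q$ with $q\ne0$ on $Z$. One can arrange a single global denominator $q$ nonvanishing on $Z$, and then replace it by $q^2+\sum g_i^2$, where the $g_i$ define $Z$, adjusting the numerator accordingly. This makes the denominator nowhere zero on $\R^m$.

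\textbf{Step 3 (two triangular maps).} Write $f=(f_u,f_v,f_w)$.

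First let $\alpha:\R^m\to\R^m$ be a regular extension of $(f_w-w)\circ(u|_X)^{-1}$ from $X_u$. Set $\sigma_1(u,v,w)=(u,v,w+\alpha(u))$. This is a regular automorphism with regular inverse $(u,v,w-\alpha(u))$, and on $X$ its $w$-coordinate equals $f_w$. Hence $w|_{\sigma_1(X)}$ is the composite of $\sigma_1^{-1}$, $f$ and the isomorphism $w|_Y\colon Y\to Y_w$, so it is an isomorphism onto $Y_w$.

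Next let $\beta,\gamma$ be regular extensions to $\R^m$ of $(f_u-u)\circ\psi$ and $(f_v-v)\circ\psi$, where $\psi=(w|_{\sigma_1(X)})^{-1}$ is composed with $\sigma_1^{-1}$ to land in $X$. Set $\sigma_2(u,v,w)=(u+\beta(w),v+\gamma(w),w)$.

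Then $F=\sigma_2\circ\sigma_1$ is a tame regular isomorphism, of the form $(P_1/Q_1,\dots,P_n/Q_n)$ with nowhere vanishing denominators, and $F|_X=f$ by construction.
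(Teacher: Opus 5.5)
The paper does not prove this statement; it is quoted from \cite{Jelonek2008}, so your argument has to stand on its own. Its architecture is sound and uses the hypothesis exactly: two disjoint blocks of $2k+1$ coordinates need $n\ge 2(2k+1)$. Step 2 is the standard $q^2+\sum g_i^2$ trick. Step 3 checks out: $\sigma_1(x)=(u(x),v(x),f_w(x))$, and injectivity of $w$ on $\sigma_1(X)$ gives $\sigma_1^{-1}\psi(f_w(x))=x$, hence $F(x)=f(x)$; both shears are compositions of elementary regular automorphisms.

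The gap is Step 1. You flag it as the main obstacle but only assert it, and the properties you list (injective, immersive and proper on the real points) do not imply its conclusion. Without that conclusion, the extension in Step 2 can fail. For example, $X=\{x=z^2+1,\ y=z^3+z\}\subset\R^3$ is a smooth curve, and $(x,y,z)\mapsto(x,y)$ is injective on $X$ (because $z=y/x$), immersive and proper. However, the non-real points $z=\pm i$ both map to $(0,0)$. So the image is the cubic $y^2=x^2(x-1)$ minus its acnode, which is not algebraic. Moreover, the inverse $y/x$ has no regular extension to $\R^2$: if $p/q$ with $q$ nowhere zero on $\R^2$ were one, then $p=z\,q$ along the complexified parametrization, and $z=\pm i$ gives $p(0,0)=\pm i\,q(0,0)$, forcing $q(0,0)=0$. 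Appending a zero coordinate gives the same failure with $m=2k+1$.

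The missing idea is to impose genericity on the complexification $X_\C$. Choose the real kernel of $\pi:\R^n\to\R^{2k+1}$ so that its projectivization avoids three sets:
\begin{enumerate}
\item the closure of the secant directions $\{[x-y]: x\neq y\in X_\C\}\subset\mathbb{P}^{n-1}$, which has dimension at most $2k$ (this is where $m\ge 2k+1$ is really needed);
\item the tangent directions of $X$ at its real points;
\item the points at infinity of $\overline{X_\C}$.
\end{enumerate}
This is possible because real subspaces are Zariski dense in the Grassmannian. Then $\pi|_{X_\C}$ is finite and injective, so $\pi(X_\C)$ is closed. Since $\pi$ is real, a non-real $x$ with $\pi(x)\in\R^m$ would satisfy $\pi(\bar x)=\pi(x)$, contradicting injectivity; hence $\pi(X_\C)\cap\R^m=\pi(X)$ is a real algebraic set. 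Over each point of $\pi(X)$ the fibre of $\pi|_{X_\C}$ is a single real point at which $\pi$ is unramified, so by Nakayama $\pi$ is a local isomorphism there, and the inverse is regular on $\pi(X)$. With this lemma applied to $X$ and to $Y=f(X)$, your proof goes through.
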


But the most most relevant to the motivation of this paper is the following result of Birbrair and Mostowski \cite{birbrairmostowski} extended to non-compact  case in \cite{AndreVincentMMOnePoint}.

\begin{thm}[\cite{birbrairmostowski,AndreVincentMMOnePoint}]\label{thm:BiMo}
	A closed connected definable set admits a definable LNE model in some $\R^n$.
\end{thm}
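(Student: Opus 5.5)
Theorem~\ref{thm:BiMo} is a cited result; the paper quotes it from \cite{birbrairmostowski,AndreVincentMMOnePoint} without proof. Here is how I would derive the non-compact version from the compact Birbrair--Mostowski theorem together with Theorem~\ref{thm:CGMLNE}.

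\emph{Step 1: reduce to the compact case.} Let $X\subset\R^n$ be closed, connected and definable. If $X$ is compact, apply \cite{birbrairmostowski} directly. Otherwise, translate $X$ so that $0\notin X$ and pass to the one-point compactification $\hat X\subset\mathbb{S}^n$. By stereographic projection, $\mathbb{S}^n$ is a definable compact subset of $\R^{n+1}$, so $\hat X$ is a compact connected definable set.

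The inner metric of $X$ is unbounded, which is the obstacle: a bi-Lipschitz map cannot send it onto the inner metric of the bounded set $\hat X$. To deal with this, I would use inversion $x\mapsto x/\|x\|^2$ and work with the image $X'$, which is bounded near $0$. Better still, I would decompose $X$ into a compact part $X\cap \bar B_R$ and finitely many ends $X\setminus B_R$. For $R$ large, conic structure at infinity makes each end homeomorphic to a cone over a link, and the ends are treated via Theorem~\ref{thm:CGMLNE}.

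\emph{Step 2: build the model.} Apply the compact Birbrair--Mostowski construction to $\hat X$, or to the inverted set $\overline{X'}$ (compact, containing $0$). This gives a definable map $\phi$ that is inner bi-Lipschitz onto an LNE compact set $Z\subset\R^m$. Arrange that $\phi$ fixes, or sends to $0$, the point corresponding to infinity. This is possible because the construction proceeds cell by cell, and one may take the point at infinity as a vertex of the Lipschitz stratification/triangulation.

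Then invert back: $Y=\iota(Z\setminus\{0\})\cup$ (appropriate closure). By Theorem~\ref{thm:CGMLNE} (iii)$\Rightarrow$(i), $Y$ is LNE in $\R^m$.

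\emph{Step 3 (main obstacle): control the inner metric under inversion.} Inversion is not bi-Lipschitz globally. The needed fact is that the inner metric on $X$ near infinity is comparable to that of the inverted set with the weight $1/\|x\|^2$, i.e. $d_X(x,y)\approx \|x\|\,\|y\|\, d_{X'}(\iota x,\iota y)$.

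So I would need $\phi$ to preserve the distance to the point $0$ up to constants, i.e. $\|\phi(p)\|\approx\|p\|$. I would get this by choosing the Birbrair--Mostowski model so that it is bi-Lipschitz on the pair $(\overline{X'},\{0\})$. Concretely, I would use a Lipschitz conic structure of $\overline{X'}$ at $0$ (via Valette's or Mostowski's Lipschitz triangulation), so that $\phi$ is radial near $0$.

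With $\|\phi(p)\|\approx\|p\|$ and the weighted comparison above, the composite $\iota\circ\phi\circ\iota$ is inner bi-Lipschitz from $X$ onto $Y$. Definability is preserved at every step.
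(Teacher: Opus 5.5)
\textbf{Gap: the inner-metric comparison under inversion is asserted, not proved.} The paper gives no proof of this statement. It is quoted from \cite{birbrairmostowski} for compact sets and from \cite{AndreVincentMMOnePoint} (the source of Theorem~\ref{thm:CGMLNE}) for the non-compact extension. Your architecture is the natural one: apply the compact theorem to $\overline{X'}=\iota(X)\cup\{0\}$, move $\phi(0)$ to $0$, and invert back. But the step that carries all the weight, the comparison $d_X(x,y)\approx\|x\|\,\|y\|\,d(\iota x,\iota y)$, is only asserted.

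This comparison does not follow from Theorem~\ref{thm:CGMLNE}, which concerns LNE-ness only and says nothing about transporting an arbitrary inner bi-Lipschitz map. It must also be taken with the inner metric of $\overline{X'}$, with paths through $0$ allowed. With the inner metric of $\iota(X)$ itself (your $d_{X'}$) it is false. Take the line $X=\{x_2=1\}\subset\R^2$ and $x=(T,1)$, $y=(-T,1)$. Then $d_X(x,y)=2T$. But $\iota(X)$ is the circle of diameter $1$ through $0$ with $0$ removed, so $\|x\|\,\|y\|\,d_{\iota(X)}(\iota x,\iota y)\sim\pi T^2$. Only the arc through $0$, of length about $2/T$, gives the correct order $2T$.

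\textbf{How to close the gap.} The corrected comparison is true. Besides $\|\iota x-\iota y\|=\|x-y\|/(\|x\|\,\|y\|)$ and $|D\iota(z)v|=|v|/\|z\|^2$, you need two standard facts about definable sets:
(a) $d_{\overline{X'}}(z,0)\le C\|z\|$;
(b) $d_X(x,y)\le C(\|x\|+\|y\|)$, i.e.\ linear growth of the inner distance at infinity (recall $\|x\|\ge\delta>0$ on $X$).

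Put $D(x,y)=\|x\|\,\|y\|\,d_{\overline{X'}}(\iota x,\iota y)$.

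If $\|x\|$ and $\|y\|$ differ by a factor at least $2$, then $d_X(x,y)$ and $D(x,y)$ are both $\ge\|x-y\|\ge\frac{1}{3}(\|x\|+\|y\|)$. From above, (b) bounds $d_X$, and the path through $0$ together with (a) bounds $D$, both by $C(\|x\|+\|y\|)$.

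Otherwise, take a near-geodesic for $d_X$, resp.\ $d_{\overline{X'}}$. If it stays at norm at least half the smaller endpoint norm, the conformal factor of $\iota$ is essentially constant along it, and its image under $\iota$ gives the estimate. If it dips below, the corresponding side is already $\gtrsim\|x\|+\|y\|$, and (a), resp.\ (b), bounds the other side.

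Apply this to $X$ and to $Y=\iota(Z\setminus\{0\})$. For $Y$, (a) is automatic since $Z$ is LNE, and (b) holds since $Y$ is definable. This even gives LNE of $Y$ without Theorem~\ref{thm:CGMLNE}, since $d_Y(a,b)\approx\|a\|\,\|b\|\,d_Z(\iota a,\iota b)\approx\|a\|\,\|b\|\,\|\iota a-\iota b\|=\|a-b\|$.

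\textbf{The step you flagged as the main obstacle is automatic.} No Lipschitz conic structure and no modification of the Birbrair--Mostowski construction is needed. Translate $Z$ so that $\phi(0)=0$. Then $\|\phi(p)\|\approx d_Z(\phi(p),0)\approx d_{\overline{X'}}(p,0)\approx\|p\|$, using LNE of $Z$, bi-Lipschitzness of $\phi$, and (a).

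Two minor points: $\iota(Z\setminus\{0\})$ is already closed because $Z$ is bounded, so no further closure is needed; and the decomposition into ends in Step~1 is never used.
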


\subsection{Semialgebraic LNE models}

In this subsection we show that in the  Birbrair-Mostowski Theorem \ref{thm:BiMo} the LNE models of real algebraic curves can always be embedded into  $\R^3$ and that the obstruction to existence of semialgebraic LNE models on the real plane is purely topological - we express it in means of Kuratowski theorem on graph embeddings.

Throughout this section we assume that $X$ is a definable closed curve in $\R^n$.

\begin{defi}
	For each $p\in X$ and $r>0$ small enough there is a number $k$ such that either $X\cap B(p,r) $ is 
	a $C^1$-embedded submanifold of $\R^n$ and then $k=1$ or 
	$$X\cap B(p,r) \setminus \{p\}=\bigcup_{i=1}^kY_i, $$
	where each branch $Y_i$ is a definable smooth one-dimensional arc which is LNE and $Y_i\cap Y_j=\emptyset$ for all $j\neq i$.
	%Let $X$ be a closed definable curve in $\R^n$. 
	We call the number $k$ the \emph{strata multiplicity} of $p$ and denote by $\mathrm{sm}(p)$.
	%\end{defi}
	%
	%\begin{defi}
	
	The \emph{singularities} of a closed definable curve $X$ denoted $\mathrm{Sing}(X)$ are points $p\in X$ such that $\mathrm{sm}(p)\geq 2$. We say that \emph{a point is of self-intersection} if $\mathrm{sm}(p)>2$.
\end{defi}

Note that if $X$ is a reduced algebraic curve, then $\mathrm{Sing}(X)$ coincides with the standard singular set of the curve and self-intersection points are the points where it is locally reducible.

\begin{defi}
	For a set $X\subset\R^n$ its  \emph{linear tangent cone $L_pX$}  at $p$ is the set of vectors $v\in\K^n$ that are limits of the secant lines passing through the point $p$, i.e. 
	\begin{equation*}
		\lim_{i\to\infty}\dfrac{p_i-p}{t_i}=v
	\end{equation*}
	where $p_i\to p, p_i\in X$ and $t_i\to 0, t_i\neq0$.
\end{defi}
Therefore the linear tangent cone is the smallest flag of linear subspaces containing the standard tangent cone.

\begin{lem} \label{lem:curvesLNEBALLS}
	 Let $p\in X$ and $r>0$ small enough. 	
	There exists a definable inner-bi-Lipschitz mapping 
	$$ 
	f_p: X\cap B(p,r) \to \mathrm{Cone}_p(X\cap S^{n-1}(p,r))\cap B(p,r).
	$$
\end{lem}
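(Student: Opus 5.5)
We use the local conic structure of definable curves together with the definable choice of "radial" parametrizations. For $r>0$ small enough, the definition of strata multiplicity gives $X\cap B(p,r)\setminus\{p\}=\bigcup_{i=1}^k Y_i$ with the $Y_i$ pairwise disjoint definable smooth arcs. We first shrink $r$ and apply the monotonicity (curve selection / Łojasiewicz-type) lemma to the definable function $x\mapsto \|x-p\|$ restricted to each $Y_i$. This makes $\|x-p\|$ strictly monotone on each $Y_i$, and gives that each $Y_i$ meets the sphere $S^{n-1}(p,r)$ in exactly one point $a_i$ (in the case $k=1$ with $X$ a manifold near $p$, we treat the two half-arcs separately, so there are two points). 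Thus $X\cap S^{n-1}(p,r)=\{a_1,\dots,a_k\}$. The target $\mathrm{Cone}_p(X\cap S^{n-1}(p,r))\cap B(p,r)$ is then the union of the $k$ segments $[p,a_i]$, glued at $p$.

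We define $f_p$ branchwise. On $\overline{Y_i}$ we set
$$f_p(x)=p+\frac{\|x-p\|}{r}(a_i-p),$$
and $f_p(p)=p$. This map is definable and bijective onto $[p,a_i]$, since $t=\|x-p\|$ is a homeomorphism from $\overline{Y_i}$ onto $[0,r]$. On each branch it preserves the distance to $p$.

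The key step is to show that on each $\overline{Y_i}$ the restriction is bi-Lipschitz for the inner metric. Parametrize $\overline{Y_i}$ by $t=\|x-p\|$, writing $x=\gamma_i(t)$. The inner distance between $\gamma_i(s)$ and $\gamma_i(t)$ is $\int_s^t\|\gamma_i'\|$, while the image distance is $|t-s|$. It therefore suffices to show that $1\le \|\gamma_i'(t)\|\le C$ for $t\in(0,r]$. The lower bound is automatic, because $\frac{d}{dt}\|\gamma_i(t)-p\|=1$. For the upper bound, note that $\gamma_i'(t)$ has a limit as $t\to 0^+$ by definability (Puiseux-type expansion in the o-minimal setting), and that this limit is a unit tangent direction $v\in L_pX$. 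Indeed, $\gamma_i(t)-p=tv+o(t)$, so the radial component of $\gamma_i'$ tends to $1$ and the tangential component tends to $0$. Hence $\|\gamma_i'\|\to 1$, which bounds it on $(0,r]$ after shrinking $r$. Establishing this boundedness of the derivative of the radial parametrization is the main technical point, and it is exactly where smallness of $r$ and definability are used.

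Finally, we glue the branches. For $x\in Y_i$ and $y\in Y_j$ with $i\neq j$, every path between them passes through $p$, both in $X\cap B(p,r)$ and in the cone. So the inner distance in the source is $d_X(x,p)+d_X(p,y)$, and in the target it is $\|x-p\|+\|y-p\|$. The branchwise estimate with constant $C$ then yields the global inner bi-Lipschitz bound with the same constant.
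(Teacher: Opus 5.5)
Your proof is correct and is essentially the paper's argument: each branch $\overline{Y_i}$ is made inner bi-Lipschitz to the segment $[p,a_i]$, and the pieces are glued at $p$ using that inner distances between points on different branches split through $p$ in both source and target. The only difference is that you make explicit the ``reparametrization'' the paper leaves unspecified, via the radial map $x\mapsto\|x-p\|$, which keeps $f_p$ visibly definable (arc length need not be); the bound $1\le\|\gamma_i'\|\le C$ is better justified by the monotonicity theorem together with $\|\gamma_i(t)-p\|=t$ than by a Puiseux-type expansion, which need not exist in a general o-minimal structure.
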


\begin{proof} 	
	Let $Z_i:=\overline{Y_i}$ and $y_i$ be the unique point of $Z_i\cap S^{n-1}(p,r)$. 	
	For some $r>0$ sufficiently small, the smooth curve $Z_i\cap B(p,r)$ is inner bi-Lipschitz equivalent %(with respect to the inner metric) 
	to a line segment. By reparametrizing we obtain that  $Z_i$ is inner bi-Lipschitz equivalent via a definable mapping~$f_i$ to the line segment $[p,y_i]$ with Lipschitz constant $L_i$ for all $1\leq i\leq\mathrm{sm}(p)$.

	Define the map $f_p$ from $X\cap{B(p,r)}$ to the union of half lines 
	$$U:=\bigcup_{i=1}^{\mathrm{sm}(p)}[p,y_i)$$
	as $f_p(x)=f_i(x)$ if $x\in Z_i$. Moreover, it is inner bi-Lipschitz  with constant $M=\max_iL_i$. 	
	Indeed,  let $x\in Y_i$, $y\in Y_j$ and $d_i$ be the inner distance of $Y_i$, then, for $i\neq j$,
	\begin{align*}
		d_U(f_p(x),f_p(y))&=d_U(f_i(p),f_i(x))+d_U(f_j(p),f_j(y))\\
		&\leq L_id_i(p,x)+L_jd_j(p,y) \leq Md_X(x,y).
		%	&\leq Md_i(p,x)+Md_j(p,y)\\
		%	&=Md_X(x,y).
	\end{align*}
	Similarly, $d_U(f_p(x),f_p(y))\geq \dfrac{1}{M}d_X(x,y)$.
	
\end{proof}

%
%\

\begin{lem}\label{lemSAMEDIM}
	Any  closed connected definable curve of $\R^n$ has a semialgebraic LNE model in  $\R^n$.
\end{lem}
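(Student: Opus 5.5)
The plan is to replace $X$ by a piecewise linear (hence semialgebraic) graph in $\R^n$ that is inner bi-Lipschitz to $X$, and then make it LNE by controlling the angles at vertices and the behaviour at infinity.

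First, treat the compact case. Since $X$ is a closed definable curve, $\Sing(X)$ is finite, and so is the set of points where $X$ fails to be a $C^1$ submanifold. Around each such point $p$ fix a small radius $r_p$ as in Lemma~\ref{lem:curvesLNEBALLS}. That lemma replaces $X\cap B(p,r_p)$, via an inner bi-Lipschitz map, by the cone over the finite set $X\cap S^{n-1}(p,r_p)$, i.e.\ a star of $\mathrm{sm}(p)$ segments. The complement of these balls is a finite disjoint union of compact smooth definable arcs (and possibly circles). Each arc is inner bi-Lipschitz, by arc-length reparametrisation, to a segment of the same length, and each circle to a regular polygon. Gluing these pieces along their endpoints gives an inner bi-Lipschitz map from $X$ onto an abstract finite metric graph $\Gamma$ with straight edges. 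The gluing is legitimate because inner distances on a graph are computed along edges through vertices, exactly as in the proof of Lemma~\ref{lem:curvesLNEBALLS}.

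Next, realise $\Gamma$ in $\R^n$, with $n\ge 2$, as an LNE polygonal set. A finite graph need not be planar, but in the plane I would allow edges to be subdivided and realised as polygonal paths. Straight-edge realisation of a nonplanar graph in $\R^2$ is then impossible without crossings, so the target must be $\R^n$ with the given $n$. Here I rely on the fact that $X$ itself is a topological embedding of $\Gamma$ in $\R^n$. I would approximate $X$ by a PL curve: choose a fine definable subdivision of $X$ and replace each small subarc by its chord. Near a point where $X$ is $C^1$, chords are bi-Lipschitz to arcs. Near a singular point $p$, $X$ is already replaced by its cone over $S(p,r_p)$, which lies in $\R^n$ and is a genuine star. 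The resulting PL set $P\subset\R^n$ is homeomorphic to $X$ and, edge by edge, inner bi-Lipschitz to it, so $P$ is inner bi-Lipschitz to $X$.

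A compact PL curve in which distinct edges meet only at common vertices is LNE. At a vertex, two segments meeting at a positive angle $\theta$ satisfy $|x-y|\ge c(\theta)(|x-p|+|y-p|)$. Away from vertices, disjoint compact pieces lie at positive distance. The one hazard is angle zero, i.e.\ two branches tangent at $p$. By construction the branches of the cone meet $S(p,r_p)$ at distinct points, so the angles are positive, and the compact PL case is done.

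For noncompact $X$ I would use Theorem~\ref{thm:CGMLNE}. Translate so that $0\notin X$ and apply the inversion. Then $X'=\overline{\iota(X)}=\iota(X)\cup\{0\}$ is a compact connected definable curve, with $0$ a point of strata multiplicity equal to the number of ends of $X$. Build the LNE PL model $P'$ of $X'$ as above. Arrange that $0$ is a vertex of $P'$ and that the inner bi-Lipschitz map fixes $0$ (the cone construction at $p=0$ does this), and translate so that $0$ corresponds to $0$. Then invert back: $P=\iota(P'\setminus\{0\})$ is semialgebraic, closed and connected, and LNE by Theorem~\ref{thm:CGMLNE}.

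The main obstacle is the inner bi-Lipschitz equivalence between $X$ and $P$ near infinity. Inversion is only locally bi-Lipschitz, with constant behaving like $|x|^2$. However, the inner distances on $X$ and $P$ near infinity are both comparable to the pull-backs of inner distances on $X'$ and $P'$ through the same inversion, with matching conformal factors, because the map $X'\to P'$ preserves $|\cdot|$ up to bounded ratio near $0$. To secure that ratio I would choose the cone-type model at $0$ so that the map preserves distance to the vertex. With that in place, $P$ is a semialgebraic LNE model of $X$ in $\R^n$.
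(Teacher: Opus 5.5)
Your proposal is correct and follows essentially the paper's route: compactify by inversion, replace small balls around the finitely many bad points by cones via Lemma~\ref{lem:curvesLNEBALLS}, replace the remaining disjoint compact smooth arcs by non-crossing broken lines, and invert back using Theorem~\ref{thm:CGMLNE}. Your extra care supplies what the paper leaves as ``by construction'': you make the point at infinity a cone vertex even when it is a smooth point of the compactified curve, and you check that $|f(z)|\asymp|z|$, so that the conformal factors of the two inversions match and the final map is inner bi-Lipschitz near infinity.
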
	

\begin{proof}

	Without loss of generality assume origin does not belong to the  closed connected definable curve $X\subset \R^n$. 
	
	Let $Z$ be the one-point closure of the image by inversion  of $X$ in $\R^n$. It is a compact definable curve.  We can choose a $r>0$ that satisfies Lemma~\ref{lem:curvesLNEBALLS} for all the singular points $p$ of~$Z$ and such that $B(p,r)$ are pairwise disjoint. 
	
	Let us define $f:Z\to\R^n$ as $f_p$ of Lemma~\ref{lem:curvesLNEBALLS} on $Z\cap B(p,r)$ whenever $\mathrm{sm}(p)>1$. The function~$f$ takes any connected component of $Z\setminus \bigcup_{p: \mathrm{sm}(p)>1} B(p,r)$ to a broken line with a finite number of segments and fixed end-points so that no two of these broken lines intersect. It is easy to see it can be done inner (and outer) bi-Lipschitzly simply by the fact that these connected components are disjoint compact smooth arcs.  By translation we may assume $f(0)=0$. Then the function $f$ is clearly inner-bi-Lipschitz and definable.

	The image $f(Z)$ in $\R^n$  is a semialgebraic curve. It is clearly LNE and by Theorem~\ref{thm:CGMLNE} its image by inversion is LNE. hus the latter, by construction, is an LNE model of~$X$. 
\end{proof}

A graph in $\R^n$ is a finite collection of vertices (points) and edges (arcs) in $\R^n$ between vertices such that the interior of an edge contains no vertex and no point of any other edge. %, see for instance \cite{AGRAPHBOOK}. 
An abstract graph is a collection of vertices and edges that have both end-points on vertices. An embedding of a graph is a graph isomorphism between an abstract graph and a graph in $\R^n$, see~\cite{Mohar2001}. Note that any connected finite graph can be embedded as an LNE semialgebraic curve.

Denote by $\overline{X}^{\mathbb{S}^n}$ the closure of $X$ in the one-point compactification $\mathbb{S}^n$ of $\R^n$. 
\begin{defi}\label{defGRAPH}
	The \textit{abstract compact graph} of a closed definable curve $X$ is the pair $G(X)=(V(X),E(X))$ where
	\begin{itemize}
		\item The set $V(X)$ of vertices consists of all points $p\in\mathrm{Sing}(\overline{X}^{\mathbb{S}^n})$ such that $\mathrm{sm}(p)\geq 2$;
		\item The set $E(X)$ of edges consists of all $Y\subset X$ that are homeomorphic to $(0,1)$, disjoint from $V(X)$ and such that $\overline{Y}\backslash Y\subset V(X)$%, the latter set being the vertices associated to this edge.
	\end{itemize}
\end{defi}

For closed definable curves the Definition~\ref{defGRAPH} of an abstract compact graph  is well-posed and 
the degree of a vertex of the graph %in abstract  graph 
is equal to strata multiplicity of the curve:
$$\deg p = \mathrm{sm}(p).$$
%\end{prop}

Kuratowski graphs are two specific graphs: the complete bipartite graph on two sets of $3$ vertices each $K_{3,3}$ and the complete graph on $5$ vertices $K_5$.

\tikzset{every picture/.style={line width=0.75pt}} %set default line width to 0.75pt        
\begin{center}
	\begin{tikzpicture}[x=0.6pt,y=0.6pt,yscale=-1,xscale=1]
		%uncomment if require: \path (0,472); %set diagram left start at 0, and has height of 472
		
		%Straight Lines [id:da7758371225891249] 
		\draw    (182,104) -- (338,104) ;
		%Straight Lines [id:da8760821356587635] 
		\draw    (182,182) -- (338,182) ;
		%Straight Lines [id:da7541401273718503] 
		\draw    (182,260) -- (338,260) ;
		%Straight Lines [id:da7605305779644144] 
		\draw    (182,104) -- (338,260) ;
		\draw [shift={(338,260)}, rotate = 45] [color={rgb, 255:red, 0; green, 0; blue, 0 }  ][fill={rgb, 255:red, 0; green, 0; blue, 0 }  ][line width=0.75]      (0, 0) circle [x radius= 3.35, y radius= 3.35]   ;
		\draw [shift={(182,104)}, rotate = 45] [color={rgb, 255:red, 0; green, 0; blue, 0 }  ][fill={rgb, 255:red, 0; green, 0; blue, 0 }  ][line width=0.75]      (0, 0) circle [x radius= 3.35, y radius= 3.35]   ;
		%Straight Lines [id:da3457516856694647] 
		\draw    (182,260) -- (338,104) ;
		\draw [shift={(338,104)}, rotate = 315] [color={rgb, 255:red, 0; green, 0; blue, 0 }  ][fill={rgb, 255:red, 0; green, 0; blue, 0 }  ][line width=0.75]      (0, 0) circle [x radius= 3.35, y radius= 3.35]   ;
		\draw [shift={(182,260)}, rotate = 315] [color={rgb, 255:red, 0; green, 0; blue, 0 }  ][fill={rgb, 255:red, 0; green, 0; blue, 0 }  ][line width=0.75]      (0, 0) circle [x radius= 3.35, y radius= 3.35]   ;
		%Straight Lines [id:da833113468935486] 
		\draw    (182,104) -- (338,182) ;
		%Straight Lines [id:da012788123690497422] 
		\draw    (182,260) -- (338,182) ;
		\draw [shift={(338,182)}, rotate = 333.43] [color={rgb, 255:red, 0; green, 0; blue, 0 }  ][fill={rgb, 255:red, 0; green, 0; blue, 0 }  ][line width=0.75]      (0, 0) circle [x radius= 3.35, y radius= 3.35]   ;
		\draw [shift={(182,260)}, rotate = 333.43] [color={rgb, 255:red, 0; green, 0; blue, 0 }  ][fill={rgb, 255:red, 0; green, 0; blue, 0 }  ][line width=0.75]      (0, 0) circle [x radius= 3.35, y radius= 3.35]   ;
		%Straight Lines [id:da16233539560628774] 
		\draw    (182,182) -- (338,104) ;
		\draw [shift={(338,104)}, rotate = 333.43] [color={rgb, 255:red, 0; green, 0; blue, 0 }  ][fill={rgb, 255:red, 0; green, 0; blue, 0 }  ][line width=0.75]      (0, 0) circle [x radius= 3.35, y radius= 3.35]   ;
		\draw [shift={(182,182)}, rotate = 333.43] [color={rgb, 255:red, 0; green, 0; blue, 0 }  ][fill={rgb, 255:red, 0; green, 0; blue, 0 }  ][line width=0.75]      (0, 0) circle [x radius= 3.35, y radius= 3.35]   ;
		%Straight Lines [id:da8749843785594066] 
		\draw    (338,260) -- (182,182) ;
		\draw [shift={(182,182)}, rotate = 206.57] [color={rgb, 255:red, 0; green, 0; blue, 0 }  ][fill={rgb, 255:red, 0; green, 0; blue, 0 }  ][line width=0.75]      (0, 0) circle [x radius= 3.35, y radius= 3.35]   ;
		\draw [shift={(338,260)}, rotate = 206.57] [color={rgb, 255:red, 0; green, 0; blue, 0 }  ][fill={rgb, 255:red, 0; green, 0; blue, 0 }  ][line width=0.75]      (0, 0) circle [x radius= 3.35, y radius= 3.35]   ;
		%Straight Lines [id:da24098584603315876] 
		\draw    (546,104) -- (468,156) ;
		\draw [shift={(468,156)}, rotate = 146.31] [color={rgb, 255:red, 0; green, 0; blue, 0 }  ][fill={rgb, 255:red, 0; green, 0; blue, 0 }  ][line width=0.75]      (0, 0) circle [x radius= 3.35, y radius= 3.35]   ;
		\draw [shift={(546,104)}, rotate = 146.31] [color={rgb, 255:red, 0; green, 0; blue, 0 }  ][fill={rgb, 255:red, 0; green, 0; blue, 0 }  ][line width=0.75]      (0, 0) circle [x radius= 3.35, y radius= 3.35]   ;
		%Straight Lines [id:da43400198842111803] 
		\draw    (494,260) -- (468,156) ;
		\draw [shift={(468,156)}, rotate = 255.96] [color={rgb, 255:red, 0; green, 0; blue, 0 }  ][fill={rgb, 255:red, 0; green, 0; blue, 0 }  ][line width=0.75]      (0, 0) circle [x radius= 3.35, y radius= 3.35]   ;
		\draw [shift={(494,260)}, rotate = 255.96] [color={rgb, 255:red, 0; green, 0; blue, 0 }  ][fill={rgb, 255:red, 0; green, 0; blue, 0 }  ][line width=0.75]      (0, 0) circle [x radius= 3.35, y radius= 3.35]   ;
		%Straight Lines [id:da6377675467800552] 
		\draw    (624,156) -- (546,104) ;
		\draw [shift={(546,104)}, rotate = 213.69] [color={rgb, 255:red, 0; green, 0; blue, 0 }  ][fill={rgb, 255:red, 0; green, 0; blue, 0 }  ][line width=0.75]      (0, 0) circle [x radius= 3.35, y radius= 3.35]   ;
		\draw [shift={(624,156)}, rotate = 213.69] [color={rgb, 255:red, 0; green, 0; blue, 0 }  ][fill={rgb, 255:red, 0; green, 0; blue, 0 }  ][line width=0.75]      (0, 0) circle [x radius= 3.35, y radius= 3.35]   ;
		%Straight Lines [id:da9367512630141357] 
		\draw    (624,156) -- (598,260) ;
		\draw [shift={(598,260)}, rotate = 104.04] [color={rgb, 255:red, 0; green, 0; blue, 0 }  ][fill={rgb, 255:red, 0; green, 0; blue, 0 }  ][line width=0.75]      (0, 0) circle [x radius= 3.35, y radius= 3.35]   ;
		\draw [shift={(624,156)}, rotate = 104.04] [color={rgb, 255:red, 0; green, 0; blue, 0 }  ][fill={rgb, 255:red, 0; green, 0; blue, 0 }  ][line width=0.75]      (0, 0) circle [x radius= 3.35, y radius= 3.35]   ;
		%Straight Lines [id:da3121188894973973] 
		\draw    (598,260) -- (494,260) ;
		\draw [shift={(494,260)}, rotate = 180] [color={rgb, 255:red, 0; green, 0; blue, 0 }  ][fill={rgb, 255:red, 0; green, 0; blue, 0 }  ][line width=0.75]      (0, 0) circle [x radius= 3.35, y radius= 3.35]   ;
		\draw [shift={(598,260)}, rotate = 180] [color={rgb, 255:red, 0; green, 0; blue, 0 }  ][fill={rgb, 255:red, 0; green, 0; blue, 0 }  ][line width=0.75]      (0, 0) circle [x radius= 3.35, y radius= 3.35]   ;
		%Straight Lines [id:da5401047665763348] 
		\draw    (546,104) -- (494,260) ;
		\draw [shift={(494,260)}, rotate = 108.43] [color={rgb, 255:red, 0; green, 0; blue, 0 }  ][fill={rgb, 255:red, 0; green, 0; blue, 0 }  ][line width=0.75]      (0, 0) circle [x radius= 3.35, y radius= 3.35]   ;
		\draw [shift={(546,104)}, rotate = 108.43] [color={rgb, 255:red, 0; green, 0; blue, 0 }  ][fill={rgb, 255:red, 0; green, 0; blue, 0 }  ][line width=0.75]      (0, 0) circle [x radius= 3.35, y radius= 3.35]   ;
		%Straight Lines [id:da7175565297127539] 
		\draw    (624,156) -- (494,260) ;
		\draw [shift={(494,260)}, rotate = 141.34] [color={rgb, 255:red, 0; green, 0; blue, 0 }  ][fill={rgb, 255:red, 0; green, 0; blue, 0 }  ][line width=0.75]      (0, 0) circle [x radius= 3.35, y radius= 3.35]   ;
		\draw [shift={(624,156)}, rotate = 141.34] [color={rgb, 255:red, 0; green, 0; blue, 0 }  ][fill={rgb, 255:red, 0; green, 0; blue, 0 }  ][line width=0.75]      (0, 0) circle [x radius= 3.35, y radius= 3.35]   ;
		%Straight Lines [id:da31260948532636756] 
		\draw    (624,156) -- (468,156) ;
		\draw [shift={(468,156)}, rotate = 180] [color={rgb, 255:red, 0; green, 0; blue, 0 }  ][fill={rgb, 255:red, 0; green, 0; blue, 0 }  ][line width=0.75]      (0, 0) circle [x radius= 3.35, y radius= 3.35]   ;
		\draw [shift={(624,156)}, rotate = 180] [color={rgb, 255:red, 0; green, 0; blue, 0 }  ][fill={rgb, 255:red, 0; green, 0; blue, 0 }  ][line width=0.75]      (0, 0) circle [x radius= 3.35, y radius= 3.35]   ;
		%Straight Lines [id:da7483458929437976] 
		\draw    (598,260) -- (468,156) ;
		\draw [shift={(468,156)}, rotate = 218.66] [color={rgb, 255:red, 0; green, 0; blue, 0 }  ][fill={rgb, 255:red, 0; green, 0; blue, 0 }  ][line width=0.75]      (0, 0) circle [x radius= 3.35, y radius= 3.35]   ;
		\draw [shift={(598,260)}, rotate = 218.66] [color={rgb, 255:red, 0; green, 0; blue, 0 }  ][fill={rgb, 255:red, 0; green, 0; blue, 0 }  ][line width=0.75]      (0, 0) circle [x radius= 3.35, y radius= 3.35]   ;
		%Straight Lines [id:da5964723188202141] 
		\draw    (598,260) -- (546,104) ;
		\draw [shift={(546,104)}, rotate = 251.57] [color={rgb, 255:red, 0; green, 0; blue, 0 }  ][fill={rgb, 255:red, 0; green, 0; blue, 0 }  ][line width=0.75]      (0, 0) circle [x radius= 3.35, y radius= 3.35]   ;
		\draw [shift={(598,260)}, rotate = 251.57] [color={rgb, 255:red, 0; green, 0; blue, 0 }  ][fill={rgb, 255:red, 0; green, 0; blue, 0 }  ][line width=0.75]      (0, 0) circle [x radius= 3.35, y radius= 3.35]   ;

	\end{tikzpicture}
	\captionof{figure}{The Kuratowski graphs $K_{3,3}$ and $K_5$.}
\end{center}

Now we can pose the inner Lipschitz classification problem in terms of graphs as follows.

\begin{thm}\label{thm:graphs}
	\ 
	\begin{enumerate}
		\item Any closed connected definable curve has a semialgebraic LNE model in~$\R^3$.
		\item A closed connected definable curve has an LNE model in $\R^2$ if and only if its abstract compact graph does not contain a graph that is a subdivision of a Kuratowski graph as a subgraph.
		\item Any definable curve has a semialgebraic LNE model on a smooth affine algebraic LNE surface.
	\end{enumerate}
\end{thm}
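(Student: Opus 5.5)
The strategy is to reduce everything to the compact picture used in the proof of Lemma~\ref{lemSAMEDIM}. We move the curve off the origin, invert, and take the one-point closure $Z$ in $\mathbb{S}^n$. Then $Z$ is a compact definable curve whose abstract graph is $G(X)$, where the point at infinity is a vertex if it is singular. By Lemma~\ref{lem:curvesLNEBALLS}, near each vertex $Z$ is inner bi-Lipschitz to a cone over finitely many points, i.e.\ a star of segments. Away from the vertices, $Z$ is a finite union of disjoint compact smooth arcs (or circles), which are inner bi-Lipschitz to straight segments or polygons. Hence $(Z,d_Z)$ is inner bi-Lipschitz, via a definable map, to any \emph{rectilinear} embedding of the graph $G(X)$ in which edges are broken lines. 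Such a polygonal embedding is a compact semialgebraic set. If its edges meet at positive angles at vertices and distinct edges stay a positive distance apart away from common vertices, it is LNE. Applying Theorem~\ref{thm:CGMLNE} and inverting back, with the image of the point corresponding to $\infty$ placed at the origin, gives an LNE model of $X$. So each item reduces to finding a suitable polygonal embedding of $G(X)$.

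For (1): every finite graph (loops and multi-edges included, after subdividing) has a straight-line embedding in $\R^3$. Put the vertices on the moment curve $t\mapsto(t,t^2,t^3)$, so that no four are coplanar and hence no two segments with distinct endpoints cross. Subdivide loops and multiple edges with extra generic points first. Compactness of the finitely many segments gives the positive angle and separation conditions, so the polygonal set is LNE and semialgebraic. Then invert.

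For (2): if $G(X)$ contains no subdivision of $K_{3,3}$ or $K_5$, Kuratowski's theorem gives a planar embedding. By Fáry's theorem, after subdividing multi-edges and loops, it has a straight-line embedding in $\R^2\subset\mathbb{S}^2$. Choose this embedding so that the vertex corresponding to infinity (or a point off the graph if $\infty\notin X$) maps to the inversion center, and proceed as above. Conversely, suppose $Y\subset\R^2$ is an LNE model. The inner bi-Lipschitz map is a homeomorphism, so the closure of $Y$ in $\mathbb{S}^2$ realises $G(X)$ as a graph on the sphere. Any subdivided Kuratowski subgraph would then embed in $\mathbb{S}^2$, hence in $\R^2$, contradicting Kuratowski's theorem.

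For (3): take the LNE semialgebraic model $C\subset\R^3$ from (1), or a polygonal compact version before inversion. We must place it on a smooth affine algebraic LNE surface, either by building a surface containing a bi-Lipschitz copy of $C$ or by lifting. One possible route is to embed $\R^3$ in a larger space and use a graph-type surface. The graph of a polynomial $h:\R^2\to\R$ with bounded gradient is bi-Lipschitz to $\R^2$, hence LNE and smooth. However, curves on it come only from planar curves, so this route handles only planar $G(X)$. For a nonplanar graph I would instead use a smooth algebraic surface bi-Lipschitz to a plane with handles attached. Concretely, take a compact smooth algebraic surface $S_0$ of genus $g$ large enough that $G(X)$ embeds in it, and let $S=S_0\cup$ a plane, smoothed by a Nash/algebraic approximation. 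Then realise the graph on $S$ by polygonal-type arcs that are outer LNE. Inversion (Theorem~\ref{thm:CGMLNE}) turns a compact LNE surface through the origin into an unbounded LNE surface, but the result is not algebraic in general. I expect this item to be the main obstacle: producing an algebraic surface that is both smooth and LNE and that carries an arbitrary graph with controlled geometry. My first attempt would be to take $S$ as the zero set of a small perturbation $F=\varepsilon$ of a polynomial $F$ vanishing on a thickened tubular neighbourhood boundary of the polygonal curve $C$, capped suitably at infinity. I would then check the LNE property by the local conic structure at infinity together with compactness.
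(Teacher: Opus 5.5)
Your items (1) and (2) are fine and follow the paper's route. You realise $G(X)$ as a compact LNE semialgebraic graph with the vertex for $\infty$ at the origin, then invert using Theorem~\ref{thm:CGMLNE}. Your moment-curve and F\'ary arguments, and your topological argument for the converse in (2), only make explicit what the paper leaves implicit.

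\textbf{Item (3) has a genuine gap: it is not proved.} You list possible routes and stop at an unverified ``first attempt''. As written, that attempt does not put $C$ on the surface at all, since $C$ is the core of the tube and does not lie on $\{F=\varepsilon\}$. The capping at infinity and the LNE check are also left open.

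The missing idea is that the two ingredients you already wrote down, combined, are exactly the paper's proof. First, take a compact nonsingular algebraic surface $S\subset\R^3$ of genus large enough that $G(X)$ embeds in it. Since $S$ is compact and smooth, it is LNE. The graph can be drawn on it as a semialgebraic LNE subset, for example with Nash arcs meeting transversally at the vertices. If $X$ is compact, this is already the model.

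Otherwise, translate so that the vertex corresponding to $\infty$ is $0\in S$, and invert. The set $I(S\setminus\{0\})$ is an unbounded smooth surface, because $I$ is a diffeomorphism of $\R^3\setminus\{0\}$. It is LNE by Theorem~\ref{thm:CGMLNE}. It carries the inverted graph, which is an LNE model of $X$ exactly as in (1). No plane with handles and no perturbation argument are needed.

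Your reason for discarding this route, that the inverted surface is ``not algebraic in general'', is not a real obstruction; this is what Lemma~\ref{lemma:inversion} addresses. Write $S=V(f_1,\dots,f_k)$ with $\deg f_i=d_i$ and set $g_i(y)=\|y\|^{2d_i}f_i(y/\|y\|^2)$. Then $I(S\setminus\{0\})=V(g_1,\dots,g_k)\setminus\{0\}$.

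The only thing that can go wrong is that $V(g_1,\dots,g_k)$ contains the origin as an isolated point. You can discard it by passing to $\{(y,t)\in\R^4:\ g_1(y)=\dots=g_k(y)=0,\ t\|y\|^2=1\}$. This is the graph of $1/\|y\|^2$ over $I(S\setminus\{0\})$. It is outer bi-Lipschitz to $I(S\setminus\{0\})$, because $\|y\|\geq 1/\max_{x\in S}\|x\|$ there and $1/\|y\|^2$ is Lipschitz on that region. So it is a smooth affine algebraic LNE surface carrying a semialgebraic LNE model of $X$.
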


\begin{proof}%[Proof of Theorem \ref{thm:graphs}]

	Recall that by classic Kuratowski's theorem of \cite{Kuratowski} 
	every compact graph can be embedded in~$\R^3$ and a graph can be embedded into~$\R^2$ if and only if it does not contain a graph that is a subdivision of a Kuratowski graph as subgraph.

	Let $G\subset\R^d$ be the embedding of the compact abstract graph of $X$ as an LNE set. Moreover, any graph can be embedded into an $n$-torus, a compact surface with genus~$n$ in $\R^3$, see~\cite{GRAPHS}, which can be taken to be a non-singular real algebraic set. This surface is smooth and compact, therefore LNE. The embedded graph can be considered an LNE semialgebraic subset of this surface, by using for instance Nash approximation~\cite{Shiota}.

	If $X$ is compact, then the embedding $G$ of its compact abstract graph is an LNE model of~$X$. If $X$ is not compact, we may assume that $0\in\R^d$ is the vertex of $G$ corresponding to the point at infinity. Then, the image of $G$ by inversion is an LNE model of $X$ by Theorem~\ref{thm:CGMLNE}. Note that we can take $d=2$ when the graph does not contain a Kuratowski graph and $d=3$ otherwise. Therefore, points (1-2) now follow from Lemma~\ref{lemSAMEDIM}. Moreover, the image by inversion of the algebraic smooth LNE surface is again LNE and algebraic by Theorem~\ref{thm:CGMLNE} and Lemma~\ref{lemma:inversion} which applied to the $n$-torus gives point~(3).
		
\end{proof}

Therefore, straight from the characterization of Theorem~\ref{thm:graphs} we get a following Corollary (note that the type of singularities of the curve is irrelevant).
\begin{cor}
	Any compact algebraic curve with at most $4$ self-intersection points and any algebraic curve with at most $3$ self-intersection points has an LNE model in $\R^2$.  
\end{cor}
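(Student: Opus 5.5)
By Theorem~\ref{thm:graphs}(2), it suffices to show that the abstract compact graph $G(X)$ contains no subdivision of $K_5$ or $K_{3,3}$ as a subgraph, under the stated bounds on the number of self-intersection points. We first identify, in a subdivision of a Kuratowski graph, the vertices of degree $\geq 3$. Subdividing an edge only introduces vertices of degree $2$. Hence a subdivision of $K_5$ has exactly $5$ vertices of degree $4$ (the branch vertices). A subdivision of $K_{3,3}$ has exactly $6$ vertices of degree $3$.

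Now suppose $H\subset G(X)$ is such a subdivision. A branch vertex of $H$ has degree at least $3$ in $H$, hence at least $3$ in $G(X)$. Since $\deg p=\mathrm{sm}(p)$, it therefore corresponds to a point $p\in\overline{X}^{\mathbb{S}^n}$ with $\mathrm{sm}(p)\geq 3$, i.e.\ a point of self-intersection of the closure of $X$ in $\mathbb{S}^n$. So $\overline{X}^{\mathbb{S}^n}$ must have at least $5$ self-intersection points.

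When $X$ is compact, $\overline{X}^{\mathbb{S}^n}=X$. The bound of at most $4$ self-intersection points then rules out $H$.

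When $X$ is non-compact, the closure adds exactly one new point, namely $\infty$, which may itself be a vertex of arbitrarily high strata multiplicity. The self-intersection points of $\overline{X}^{\mathbb{S}^n}$ other than $\infty$ are those of $X$, since $X$ is closed and the local structure near finite points is unchanged. So at most $3+1=4<5$ vertices of degree $\geq 3$ are available, and again $H$ cannot exist.

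We also check that the graph-theoretic setup applies. Theorem~\ref{thm:graphs}(2) is stated for connected curves. For a disconnected algebraic curve $X$ with finitely many components, we apply the argument to each connected component, since each has no more self-intersection points than $X$. We then place the resulting LNE models far apart in $\R^2$. Which convention the Corollary intends (componentwise LNE, as in the abstract) should be checked.

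The main point requiring care is the point at infinity. One might worry that it, or the gluing of several branches at infinity, creates extra vertices. It contributes a single vertex, which is exactly why the bound drops from $4$ to $3$. The remaining steps are direct counting.
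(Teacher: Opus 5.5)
Your proof is correct and is the argument the paper has in mind when it says the corollary follows straight from Theorem~\ref{thm:graphs}(2). A subdivision of $K_5$ or $K_{3,3}$ needs at least five vertices of degree $\geq 3$; by $\deg p=\mathrm{sm}(p)$ these are self-intersection points of $\overline{X}^{\mathbb{S}^n}$, and passing to the compactification adds only the single point $\infty$.

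Your extra remark on disconnected curves goes beyond what the paper addresses, and one step there needs adjusting. Unbounded components cannot in general be rigidly moved ``far apart'': if both models are pairs of crossing lines, they always meet. Instead, embed the whole compactified graph at once, which has no Kuratowski subdivision by the same count, and then invert.
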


			\section{Real algebraic LNE models}\label{sec:algebraicModels}
			
			As we saw in previous section, any algebraic set has a semialgebraic LNE model. 
			Moreover, a generic algebraic singularity or a generic algebraic set by \cite{CoGrMiConic} is  an LNE model of itself. 
			 Question from \cite{AndreVincentMMOnePoint} is: do all  algebraic sets admit real algebraic LNE models? To our mild surprise the answer for real  algebraic curves is affirmative.

			\begin{thm}\label{thm:algebraicModels}
				Any connected real  algebraic curve has a real algebraic LNE model in $\mathbb{R}^3$, which is real birationally equivalent to it. %If the curve is irreducible, then the model can also be taken to be an irreducible curve.
			\end{thm}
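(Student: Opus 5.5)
Given a connected real algebraic curve $X\subset\R^n$, I would first pass to its normalization and then rebuild the singular points and the behaviour at infinity, both in metrically controlled form, inside $\R^3$. Concretely, let $\widetilde{X}$ be a nonsingular affine model of $X$; it is birational to $X$ and is a disjoint union of finitely many smooth compact ovals and smooth noncompact branches. Let $\pi:\widetilde X\to X$ be the normalization map. For each singular point $p$ of $X$ (and for the point at infinity of $\overline{X}^{\mathbb{S}^n}$), the fibre $\pi^{-1}(p)$ is a finite set of points of $\widetilde X$, one for each branch at $p$ (half-branch pairs for the real picture). The inner metric of $X$ is determined, up to bi-Lipschitz equivalence, by the following data. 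First, $\widetilde X$ with its inner metric. Second, the identifications of the points in each fibre $\pi^{-1}(p)$. Third, the behaviour at infinity. Lemma~\ref{lem:curvesLNEBALLS} shows that near a singular point only the number of half-branches matters, not the type of singularity. So it suffices to produce an algebraic curve $Y\subset\R^3$ with the following properties: $Y$ is the image of a regular map $g:\widetilde X\to\R^3$ that is injective off the fibres, glues the fibres exactly as $\pi$ does, and has transversal smooth branches at each glued point.

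\textbf{Step 1.} Choose distinct points $q_p\in\R^3$ and construct a regular map $g:\widetilde X\to\R^3$ with $g(\pi^{-1}(p))=q_p$, with $g$ an immersion, and with the images of the distinct half-branches through $q_p$ having pairwise distinct tangent lines. I would do this by polynomial interpolation on $\widetilde X$: prescribe the values and first derivatives at the finitely many points of the fibres, then add a generic linear projection of a proper embedding of $\widetilde X$ into a high-dimensional space, composed with a polynomial vanishing to second order on the fibres. Genericity in dimension $3$ removes all extra self-intersections, since double points of a generic projection of a curve to $\R^3$ have codimension $\ge 1$. Theorem~\ref{thm:JelonekAuto} is the natural tool here. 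Embed $\widetilde X$ properly in a large $\R^N$, extend the interpolation data, and adjust using a tame automorphism. Then project generically to $\R^3$ while preserving the prescribed coincidences. The image $Y=g(\widetilde X)$ is then an algebraic curve (Zariski closure of the image, intersected with the correct semialgebraic part, or a curve with no extra real points after a generic choice), and $g$ is birational onto $Y$ because it is generically injective. Composing gives a birational equivalence $X\sim Y$.

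\textbf{Step 2: local LNE and inner bi-Lipschitz.} At each $q_p$, $Y$ is a union of smooth arcs with distinct tangents, hence LNE near $q_p$. Away from these points $Y$ is smooth and compact-locally LNE. On the compact parts, the map $\pi\circ g^{-1}$ matches branches bijectively, so by Lemma~\ref{lem:curvesLNEBALLS} applied on both sides it is inner bi-Lipschitz.

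\textbf{Step 3: behaviour at infinity.} This is the step I expect to be the main obstacle. The noncompact branches of $Y$ must be LNE at infinity, and their number must match that of $X$, so that the inner metrics agree at infinity. I would handle infinity symmetrically with the singular points. Use Theorem~\ref{thm:CGMLNE}, and arrange $g$ so that the inverted curve $Y'$ (inversion centred at a point off $Y$) has all its branches at the origin smooth with distinct tangents. Equivalently, the branches of $Y$ at infinity must have distinct asymptotic directions and be asymptotically straight, i.e.\ each branch is of the form $t\mapsto tv_i+O(1)$. This is achieved by choosing, in the Step 1 interpolation, the poles/degree behaviour at the ends of $\widetilde X$: use a rational parametrization of degree one at each end with prescribed distinct directions $v_i$. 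Lemma~\ref{lemma:inversion} keeps the inverted set algebraic. Then $Y'\cup\{0\}$ is a compact curve with only transversal smooth crossings, hence LNE, and so $Y$ is LNE by Theorem~\ref{thm:CGMLNE}. The inner equivalence at infinity follows as in the proof of Theorem~\ref{thm:graphs}. The delicate point is to keep, at the same time, algebraicity, the prescribed linear growth at the ends, and injectivity in $\R^3$. If this fails directly, I would work in $\R^{4k+2}=\R^6$, where Theorem~\ref{thm:JelonekAuto} gives all prescriptions freely, and then reduce to $\R^3$ by a generic linear projection, which preserves the transversality conditions.
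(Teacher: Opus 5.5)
Your Steps 1--2 follow the paper's overall plan: pass to a smooth model, then re-glue the fibres over the self-intersection points so that each glued point is a transverse crossing of smooth branches. Your gluing device is different. The paper projects along one coordinate after regular automorphisms (Lemmas~\ref{lem:transintersections} and~\ref{lem:transonly}); you use an interpolating map corrected by a generic term $\phi\,\ell$ that vanishes to second order on the fibres. Its genericity claims are only asserted. In particular, the Zariski closure of $g(\widetilde X)$ can contain extra isolated real points $g(w)=g(\bar w)$ coming from conjugate non-real points, and ``intersecting with the correct semialgebraic part'' does not give an algebraic set; the paper removes such points explicitly by the $g(a)t=1$ trick.

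The genuine gap is Step~3. Nothing in your construction produces the ``degree one'' behaviour at the ends. The correction $\phi\,\ell$ has high-order poles at the points at infinity of $\widetilde X$ and dominates there. Suppose that near a real point at infinity, with real local parameter $s$, one has $g(s)=c\,s^{-m}+O(s^{1-m})$ with $m$ even. Then both ends through that point go off in the same direction $c$, as for the parabola $x=y^2$. The inverted curve then has two half-branches tangent to the same half-line at the origin, so it is not LNE. You have no control over the parity of $m$ or over the directions $c$. Your fallback does not help either: the automorphisms of Theorem~\ref{thm:JelonekAuto} are merely regular and give no control of asymptotics.

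The missing idea is to compactify \emph{before} normalizing. Assume $0\notin X$ and replace $X$ by $X'=\overline{I(X\setminus\{0\})}$, which is compact, algebraic and birational to $X$ by Lemma~\ref{lemma:inversion}. All ends of $X$ become half-branches of $X'$ at the origin, so infinity is now an ordinary point of a compact curve.

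Then run your Steps 1--2 on $X'$, gluing the fibre over $0$ exactly like the fibre over any other self-intersection point. This gives a compact algebraic $W$ with only smooth points and transverse crossings of smooth branches, and properness of the gluing map is automatic. Translate the glued point $p_\infty$ to the origin and invert back. The result is:
\begin{itemize}
\item algebraic and birational to $X$, by Lemma~\ref{lemma:inversion};
\item LNE, by Theorem~\ref{thm:CGMLNE};
\item inner bi-Lipschitz to $X$, because both have the same abstract compact graph and each end is inner bi-Lipschitz to a half-line.
\end{itemize}
For $\R^3$ you can keep your direct construction. Alternatively, as in the paper, glue in high dimension and project the compact glued curve generically to $\R^3$, which is outer bi-Lipschitz \cite{LevAlexJelonekEmbeddings}, before the final inversion. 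With this reordering Step~3 reduces to Steps 1--2 applied at the point $0$ of $X'$.
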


	The proof of Theorem~\ref{thm:algebraicModels} is constructive: proceeds by specializing to compact case by using inversion of Lemma~\ref{lemma:inversion}, resolving curve singularities, inductively reconstructing intersections via projections so that all branches meet transversely by Lemma~\ref{lem:transonly} and then coming back by inversion to non-compact case using Theorem~\ref{thm:CGMLNE}. Note that the construction of a convenient map of Lemmas~\ref{lem:transintersections} and \ref{lem:transonly} can be simplified by choosing a regular automorphism with prescribed derivatives at points developed in an upcoming paper \cite{JMM}.

	\begin{defi}
		Let $X\subset\R^n$ be an algebraic curve and  $(X_1,p),\ldots,(X_s,p)$ be the irreducible components of germ $(X,p)$ at $p\in X$ with $s>1$. We say that $X$ is a \emph{transverse intersection at $p$} if for every $1\leq i\neq j\leq s$ we have
		$$L_pX_i\cap L_pX_j=\{0\}.$$
	\end{defi}
	
	Thus a real algebraic curve $X\subset\R^n$ is a transverse intersection at a point $p\in X$ if and only if its linear tangent cone $L_pX$ consists of a finite number of lines through the origin equal to the number of branches of its germ at the point $p$ and greater than $1$.
	
	First, note that a (real or complex) LNE algebraic curve has only smooth points or singularities which are transverse intersections of smooth branches, see for instance~\cite{AlexLevCurves,Birbrair2018,NeumannPichon,DenkowskiTibar}. Globally one needs to also consider asymptotics at infinity, see \cite{MVAlnecurves,CoGrMiConic}. In short, due to aforementioned results and \cite{AndreVincentMMOnePoint}, a real algebraic curve is LNE if and only if its compact image by inversion has singularities which are at most transverse intersections of smooth branches. The next lemma is obvious.

		\begin{lem}\label{lem:transvcone}
			Let $X\subset\R^n$ be an algebraic curve that is a transverse intersection at $p\in X$. If $F:\R^n\to\R^N$ is a regular embedding, then $F(X)$ is a transverse intersection at $F(p)$.
		\end{lem}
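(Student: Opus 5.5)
The plan is to show that $F$ is locally injective on tangent data: it carries the linear tangent cone of each branch at $p$ bijectively onto the linear tangent cone of the image branch at $F(p)$.

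First I will clarify what ``regular embedding'' gives us. $F$ is a polynomial (or regular) map that is injective on $\R^n$. It is also an immersion at the relevant points, i.e.\ $d_pF$ is injective; this is part of the standard notion of embedding, as in Theorem~\ref{thm:JelonekAuto}. Let $(X_1,p),\dots,(X_s,p)$ be the irreducible components of the germ $(X,p)$, with $s>1$. Then two things hold:
\begin{itemize}
\item[(a)] Since $F$ is a homeomorphism onto its image near $p$, the images $(F(X_i),F(p))$ are exactly the irreducible components of the germ $(F(X),F(p))$. Irreducibility of each image germ follows because $F$ restricted to $X_i$ is an analytic homeomorphism onto $F(X_i)$, and the components are pairwise distinct by injectivity. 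Hence the number of branches is still $s>1$.
\item[(b)] I will show that $L_{F(p)}F(X_i)=d_pF(L_pX_i)$. Take a secant sequence $(p_k-p)/t_k\to v$ with $p_k\in X_i$. A first-order Taylor expansion gives
$$\frac{F(p_k)-F(p)}{t_k}=d_pF\Big(\frac{p_k-p}{t_k}\Big)+\frac{o(\|p_k-p\|)}{t_k}\to d_pF(v),$$
because $\|p_k-p\|/|t_k|$ stays bounded. This proves the inclusion $\supseteq$. For the reverse inclusion, start with a secant sequence $(F(q_k)-F(p))/t_k\to w$ for the image. Injectivity of $d_pF$ gives $\|F(q_k)-F(p)\|\geq c\|q_k-p\|$ near $p$. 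So $(q_k-p)/t_k$ is bounded, and after passing to a subsequence it converges to some $v\in L_pX_i$ with $d_pF(v)=w$.
\end{itemize}

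Transversality then follows by linear algebra. For $i\neq j$, the injectivity of $d_pF$ gives
$$d_pF(L_pX_i)\cap d_pF(L_pX_j)=d_pF(L_pX_i\cap L_pX_j)=\{0\}.$$

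The main point needing care is the hypothesis on $d_pF$. If ``regular embedding'' only meant an injective regular map, then a map like $t\mapsto t^3$ would collapse tangent directions, and the statement could fail. So I will invoke the immersion property explicitly. For the regular isomorphisms used later, such as those from Theorem~\ref{thm:JelonekAuto}, this is automatic: there $F$ has a regular inverse, so $d_pF$ is invertible.
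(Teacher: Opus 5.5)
Your argument is correct and is the routine verification the paper leaves out when it calls this lemma obvious. A regular embedding has a regular left inverse near its image, so $d_pF$ is injective and $F$ is a homeomorphism onto its image near $p$; hence $d_pF$ carries each $L_pX_i$ onto $L_{F(p)}F(X_i)$ and preserves trivial pairwise intersections. Your caveat that the immersion property is genuinely needed is worth keeping: the injective polynomial map $(x,y)\mapsto(x^3,y)$ sends the transverse lines $y=\pm x$ to the curves $x=\pm y^3$, both tangent to the $y$-axis, which shows what ``regular embedding'' must mean in this lemma.
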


				\begin{lem}\label{lem:interpolation}
				    Let $\{p_1,\dots,p_k\}\subset\R^N$ be a set of $k$ distinct points in $\R^N$ and $\{t_1,\dots,t_k\}\subset\R$. There exists a polynomial function $h:\R^N\to\R$ such that $h(p_j)=t_j$ for every $j=1,\dots,k$. 
				\end{lem}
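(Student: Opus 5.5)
We can prove Lemma~\ref{lem:interpolation} by Lagrange interpolation, after reducing to the case of one variable by a linear projection.

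First choose a linear form $\ell:\R^N\to\R$ that separates the points, i.e. $\ell(p_i)\neq\ell(p_j)$ for $i\neq j$. Such a form exists. For each pair $i\neq j$, the set of $\ell\in(\R^N)^*$ with $\ell(p_i-p_j)=0$ is a hyperplane, because $p_i-p_j\neq 0$. A finite union of proper hyperplanes cannot cover $(\R^N)^*$, so a generic $\ell$ avoids all of them. Concretely, one may take $\ell(x)=x_1+cx_2+\dots+c^{N-1}x_N$ and avoid the finitely many $c$ that are roots of the nonzero polynomials $\ell(p_i-p_j)$ in $c$.

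Next put $s_j:=\ell(p_j)$. These are $k$ distinct real numbers. The one-variable Lagrange polynomial
$$g(s)=\sum_{j=1}^k t_j\prod_{i\neq j}\frac{s-s_i}{s_j-s_i}$$
satisfies $g(s_j)=t_j$. Its denominators are nonzero precisely because the $s_j$ are distinct.

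Finally set $h:=g\circ\ell$. It is a polynomial on $\R^N$, and $h(p_j)=g(s_j)=t_j$ for every $j$.

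The only point needing care is the existence of the separating linear form. This follows from the fact that $\R$ is infinite, so a nonzero polynomial in $c$ cannot vanish for all $c$. Everything else is routine.

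Alternatively, one can avoid projections altogether. For each $j$, choose, for every $i\neq j$, an affine function $a_{ij}$ vanishing at $p_i$ with $a_{ij}(p_j)=1$; this is possible since $p_i\neq p_j$. Then
$$h=\sum_j t_j\prod_{i\neq j}a_{ij}$$
interpolates the values directly.
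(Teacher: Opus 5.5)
Your proof is correct, and your main route is genuinely different from the paper's, although your alternative construction is essentially the paper's argument.

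\textbf{Main route.} You reduce to one variable by choosing a linear form $\ell$ that separates the points. You justify its existence correctly: via $\ell(x)=x_1+cx_2+\dots+c^{N-1}x_N$, each $\ell(p_i-p_j)$ is a nonzero polynomial in $c$. You then compose $\ell$ with a univariate Lagrange polynomial.

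\textbf{The paper's route.} The paper needs no projection and no genericity choice. It writes down a multivariate Lagrange basis directly,
$L_j(x)=\prod_{i\neq j}\|x-p_i\|^2/\|p_j-p_i\|^2$.
Here $L_j(p_j)=1$ and $L_j(p_s)=0$ for $s\neq j$, simply because $\|p_j-p_i\|^2>0$. It then sets $h=\sum_j t_jL_j$.

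\textbf{Your alternative.} This is the same construction with affine factors in place of squared distances, for instance $a_{ij}(x)=\langle x-p_i,\,p_j-p_i\rangle/\|p_j-p_i\|^2$.

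\textbf{Trade-offs.} The paper's formula is fully explicit and choice-free, but it has degree $2(k-1)$. Both of your versions give degree at most $k-1$. Your projection argument also uses only that the field is infinite, so it works verbatim for finitely many points of $\C^N$, still with a real linear form. By contrast, the squared-norm factors depend on $\|v\|^2>0$ for real $v\neq 0$ and can vanish at nonreal points. For the lemma as stated, which concerns real points, this difference is immaterial. Any of the three constructions serves equally well as the input $\tilde h$ in Lemma~\ref{lem:interpolationgradient}.
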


                \begin{proof}
                    For each $j=1,\ldots,k$ define
                    $$L_j(x)=\prod_{\substack{i=1 \\ i\neq j}}^{k}\dfrac{||x-p_i||^2}{||p_j-p_i||^2}.$$

                    Note that $L_j(p_j)=1$ and $L_j(p_s)=0$ if $s\neq j$. Then, the polynomial
                    $$h(x)=\sum_{j=1}^kt_jL_j(x)$$
                    satisfies the $h(p_j)=t_j$ for every $j=1,\ldots,k$.                 
                \end{proof}
            
            \begin{lem}\label{lem:interpolationgradient}
            	Let $S\subset\R^N$ be a finite set of points and $q\in S$. 
            There exists  a polynomial $h:\R^N\to\R$ such that
            \begin{itemize}
            	\item $h(a)=0$ for every $a\in S\setminus\{q\}$;
            	\item $h(q)=1$;
            	\item $\nabla h(a)=0$ for every $a\in S$.
            \end{itemize}
        \end{lem}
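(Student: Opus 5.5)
The plan is to modify the Lagrange-type construction of Lemma~\ref{lem:interpolation} so that squared distances make every gradient vanish. Write $S=\{q,a_1,\dots,a_m\}$ with the $a_i$ distinct from $q$. As a first candidate I take
$$g(x)=\prod_{i=1}^{m}\frac{\|x-a_i\|^4}{\|q-a_i\|^4}.$$
It is a polynomial and satisfies $g(q)=1$. Each factor $\|x-a_i\|^4$ vanishes to second order at $a_i$, so $g(a_i)=0$ and $\nabla g(a_i)=0$ for every $i$.

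The only condition not yet guaranteed is $\nabla h(q)=0$, since $\nabla g(q)$ need not vanish. To fix this I compose with a one-variable polynomial. I look for $\varphi:\R\to\R$ with
$$\varphi(0)=0,\quad \varphi'(0)=0,\quad \varphi(1)=1,\quad \varphi'(1)=0.$$
The Hermite cubic $\varphi(t)=3t^2-2t^3$ does this. I then set $h=\varphi\circ g$.

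Checking the conditions is a chain rule computation.
\begin{itemize}
\item At $a\in S\setminus\{q\}$ we have $h(a)=\varphi(0)=0$ and $\nabla h(a)=\varphi'(0)\nabla g(a)=0$.
\item At $q$ we have $h(q)=\varphi(1)=1$ and $\nabla h(q)=\varphi'(1)\nabla g(q)=0$.
\end{itemize}
Since $h$ is a composition of polynomials, it is a polynomial.

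There is no real obstacle here. The only point needing care is the gradient at $q$, and the choice of $\varphi$ with $\varphi'(1)=0$ handles it. The squared squared-distances are not even strictly needed: $\varphi'(0)=0$ already kills the gradient at the $a_i$. So the simpler $g$ from Lemma~\ref{lem:interpolation}, namely $\prod_i \|x-a_i\|^2/\|q-a_i\|^2$, would also work after composing with $\varphi$. When $S=\{q\}$, the empty product gives $g\equiv1$, so $h\equiv1$ and the statement is trivially satisfied.
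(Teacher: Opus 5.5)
Your proof is correct, and it is essentially the paper's construction. Since $3t^2-2t^3=t^2(3-2t)$, your $h=g^2(3-2g)$ has the same shape as the paper's $h=\tilde h^2\,(1-2\nabla\tilde h(q)\cdot(x-q))$: the square of the Lagrange-type interpolant times a correction factor equal to $1$ at $q$ with gradient $-2\nabla g(q)$ there. The only difference is that the paper uses the linearization of that factor at $q$, while you use $g$ itself, which spares you computing $\nabla\tilde h(q)$.
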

        \begin{proof}        
            By Lemma \ref{lem:interpolation} there exists $\tilde h:\R^N\to\R$ such that $\tilde h(a)=0$ for $a\in S\setminus\{q\}$ and $\tilde h(q)=1$. Now define
            $$h(x)=\tilde h(x)^2(1-2\nabla\tilde h(q)\cdot(x-q))$$
            
            Note that $h(q)=1$ and $h(x)=0$ whenever $\tilde h(x)=0$. Furthermore,
            $$\nabla h(x)=-2\tilde h(x)^2\nabla\tilde h(q)+2\tilde h(x)(1-2\nabla\tilde h(q)\cdot(x-q))\nabla\tilde h(x).$$
            
            It follows that $\nabla h(a)=0$ for $a\in S$.
        \end{proof}

				\begin{lem}[Pointwise transverse glueing]\label{lem:transintersections}
					Let $Y$ be a compact algebraic curve in $\R^N$ such that all its singular points %$p_1,\dots,p_{i}$ 
					are  transverse intersections of smooth branches. Let $p,q\in Y$ and $S\subset Y$ be any finite set containing $\Sing(Y)\cup\{p,q\}$. There exists a mapping $\pi:Y\to Z$, where $Z\subset\R^{N'}$ is a compact algebraic curve, such that:
					\begin{enumerate}
                        \item $\pi$ is birational and proper onto its image
                        \item $\pi(Y)$ is Zariski dense in $Z$
						\item $\pi^{-1}(\pi(p))$ consists of two points $p$ and $q$
						%\item curve $\pi(Y)$ is transverse intersection at $\pi(p_i)$
						%\item
						and  $\pi$ is one-to-one on $S\setminus \{p,q\}$, i.e. % for all $p_1,\dots,p_{i-1}$
							$|S|-1=|\pi(S)|$
						\item the curve $\pi(Y)$ has transverse intersections of smooth branches at all points $\pi(\Sing (Y))\cup \pi(p)$ and is smooth at points in $\pi(S\setminus(\Sing(Y)\cup\{p,q\}) )$
					\end{enumerate}	
				
				\end{lem}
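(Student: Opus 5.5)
The plan is to construct $\pi$ explicitly as the restriction to $Y$ of a polynomial map $\R^N\to\R^{N'}$. The map will collapse $p$ and $q$ to a single point, will be an embedding near every other point of $Y$, and will be injective away from the pair $\{p,q\}$. Concretely, I take
$$\pi(x)=\bigl(\phi(x),\,h_1(x),\dots,h_m(x)\bigr),$$
where the pieces are as follows. The map $\phi$ is a polynomial map that identifies $p$ and $q$ and is an immersion near them. Each $h_j$ is a polynomial with $h_j(p)=h_j(q)$, used to separate all other pairs of points of $Y$. For $Z$ I take the Zariski closure of $\pi(Y)$. Since $Y$ is compact, $\pi(Y)$ is compact and $\pi$ is proper. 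A compact semialgebraic curve image whose Zariski closure is a curve is fine here; if needed, I take $Z$ to be the algebraic closure and note that $\pi(Y)$ is Zariski dense in it. This gives (1) and (2), provided $\pi$ is generically injective, i.e. birational onto its image.

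\textbf{Step 1: the injectivity part.} Start with the embedding $x\mapsto x$. Replace the first coordinates by
$$x\mapsto x-h(x)(q-p)\cdot(\text{something}),$$
or, more cleanly, use Lemma~\ref{lem:interpolationgradient}. Take $h$ with $h(q)=1$, $h=0$ on $S\setminus\{q\}$, and $\nabla h=0$ on $S$, and set
$$\psi(x)=x+h(x)(p-q).$$
Then $\psi(q)=p=\psi(p)$, $\psi$ fixes every other point of $S$, and $d\psi=\id$ at every point of $S$. So $\psi$ preserves the tangent lines of all branches at points of $S$. However, $\psi$ may create new coincidences elsewhere on $Y$. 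To remove them, consider the map
$$\Pi(x)=\bigl(\psi(x),\,g(x)\,x\bigr),$$
where $g$ is a polynomial vanishing to second order at $p$ and $q$ (e.g. $g=\|x-p\|^2\|x-q\|^2$), and possibly add $g(x)x^{\alpha}$ for further monomials. Two points $x\neq y$ of $Y$ are identified only if $\psi(x)=\psi(y)$ and $g(x)x=g(y)y$. The latter forces $x=y$ unless $g(x)=g(y)=0$, i.e. $\{x,y\}\subset\{p,q\}$; the degenerate case $g(x)x=g(y)y$ with $x\ne y$ and $g\neq 0$ should be excluded by a genericity argument. So I would instead use a generic linear combination: since the fibres of $\psi$ on $Y$ are finite off a finite set, a generic polynomial $g$ from a finite-dimensional space vanishing doubly at $p,q$ separates the finitely many bad pairs and, by a standard dimension count, the one-parameter families of pairs. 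This yields (3).

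\textbf{Step 2: the local structure.} At $\pi(p)$ the germ of $\pi(Y)$ is the union of the images of the branches of $Y$ at $p$ and at $q$. Since $d\psi=\id$ at $p$ and $q$ and the extra coordinates have zero derivative there, $\pi$ is an immersion at both points. The tangent line of a branch at $q$ therefore maps to the same line in $\R^N$, now based at $p$. To make the branches from $q$ transverse to those from $p$, I add one more coordinate: a polynomial $\ell(x)$ equal to $\lambda\cdot(x-q)\,h(x)$, with $\lambda$ generic, so that $\ell$ has derivative $\lambda$ at $q$ and $0$ at $p$. Then the tangent directions of the $q$-branches become $(v,\lambda\cdot v)$, while those of the $p$-branches stay $(w,0)$. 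These are pairwise independent whenever $\lambda\cdot v\neq0$. By Lemma~\ref{lem:transvcone}-style reasoning (immersion plus injectivity near each point of $S\setminus\{p,q\}$), the singular points of $Y$ remain transverse intersections and the smooth points of $S$ stay smooth. This gives (4).

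I expect the main obstacle to be the global injectivity off $\{p,q\}$ together with immersivity everywhere on $Y$, so that no new singular points appear and the image is birational. My first attempt is to add enough extra generic coordinates, e.g. $g(x)$ times all coordinates and all quadratic monomials. Then, for any two points $x\neq y$ not both in $\{p,q\}$, some coordinate distinguishes them; this follows from Lemma~\ref{lem:interpolation}-type separation, and a compactness/genericity argument on $Y\times Y$ minus a neighbourhood of the diagonal and of $\{(p,q),(q,p)\}$ makes the separation uniform. Near the diagonal, injectivity follows from $\pi$ being an immersion, which is ensured by keeping the coordinates $x$ multiplied by the nonvanishing factors.
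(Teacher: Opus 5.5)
Your route is genuinely different from the paper's. The paper embeds $Y$ in $\R^{4N+2}$, uses Jelonek's extension theorem and two polynomial automorphisms to put the configuration in special position, and then projects along $z$. That controls only the fibres over $\pi(S)$ and over the auxiliary points $r_j$, and Lemma~\ref{lem:transonly} later removes whatever new singularities appear elsewhere. You instead write down one explicit polynomial map. This is more elementary, and once completed it gives an injective immersion of $Y\setminus\{p,q\}$, so no new real singularities arise. Your local analysis is correct: $d\psi=\id$ on $S$, $\nabla\ell(q)=\lambda$ and $\nabla\ell=0$ on $S\setminus\{q\}$. Hence, choosing $\lambda\cdot v\neq 0$ for the finitely many tangent lines $v$ at $q$, the branches at $\pi(p)$ are smooth and pairwise transverse.

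There is, however, a genuine gap in (1). You only ever separate \emph{real} points of $Y$, and then write ``generically injective, i.e.\ birational''. Birationality means the complexified map $Y_\C\to Z_\C$ has degree one, and real injectivity does not give this: $t\mapsto t^3+t$ is a bijective immersion of $\R$ of degree $3$. Your dimension counts and your compactness argument on $Y\times Y$ never see the non-real points of $Y_\C$, so they cannot yield (1). This is exactly why the paper works on the complexification and arranges one-point complex fibres (Equation~\eqref{eq:rpoints}). Compactness of $Z$ is likewise only asserted. In the paper it is deduced from birationality (Claim~(2)), and Lemma~\ref{lem:transonly} relies on the resulting description $Z=\pi(Y)\cup\{c_1,\dots,c_k\}$.

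The repair fits your framework and also removes the vague genericity arguments. As you noticed, $g(x)x=g(y)y$ alone does not force $x=y$, but it does once $g$ itself is a coordinate. Take $g=\|x-p\|^2\|x-q\|^2$ and
\[
\pi=(\psi,\ \ell,\ g,\ g\,x).
\]
If $\pi(x)=\pi(y)$ and $g(x)\neq 0$, then $g(y)=g(x)$ and $g(x)x=g(x)y$, so $x=y$; this holds for complex $x,y$ as well. Real points are Zariski dense in every component of $Y_\C$ and $g>0$ on $Y\setminus\{p,q\}$, so $g$ vanishes at only finitely many points of $Y_\C$. Hence $\pi_\C$ is injective on a cofinite set and is birational onto its image. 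The following then hold:
\begin{itemize}
\item On real points, $\pi$ is injective off $\{p,q\}$ and $\pi^{-1}(\pi(p))=\{p,q\}$.
\item $\pi$ is an immersion at every $x\neq p,q$ (if $dg(v)=0$ and $g(x)v+dg(v)x=0$ then $v=0$), and at $p,q$ because $d\psi=\id$.
\item The complex fibre over $\pi(a)$, for $a\in S\setminus\{p,q\}$, is $\{a\}$.
\end{itemize}
The real points of $Z\setminus\pi(Y)$ are images of conjugate pairs of non-real points or lie in the finite set $\overline{\pi_\C(Y_\C)}\setminus\pi_\C(Y_\C)$, so there are finitely many of them and $Z$ is compact. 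Finally, (4) follows from your Step~2 together with these fibre statements.
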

			
				\begin{proof}
		Without loss of generality, up to affine translation,  we may suppose $q=0\in\R^N$. Let $Y_1,\ldots,Y_\nu$ be the irreducible components of $Y$. Choose points $\{r_1,\ldots,r_{2\nu}\}\subset Y\setminus S$ such that $r_{2j-1}, r_{2j}$ are two distinct smooth points in the irreducible component $Y_j$ for each $j=1,\ldots,\nu$ .
		
		The construction of $\pi$ will take several steps, it will be defined in \eqref{eq:piformula} as composition of a projection along direction $z$ and three regular automorphisms $\Phi, F$ and $G$ as well as a natural embedding $e$. At each step we will keep track of points in $S\cup\{r_1,,\ldots, r_{2\nu}\}$ and vectors of $L_{a} Y$ for $ a\in S$.
		
		\textbf{Embedding $e$.} 			 
        Let $M=4N+2$. Denote the coordinates on $\R^M$ by 
                    $$ (x,y,z,w) \in \R^{N-1}\times\R\times \R \times\R^{3N+1}. $$
%                    $(x,y,z,w)$, where
%                    $$x\in\R^{N-1}, y\in\R, z\in\R, w\in\R^{3N+1}.$$

                    Let $\tilde Y:=Y\times \{0_{3N+2} \}$ be the embedding of $Y$ via $e:\R^N \to \R^N\times \{0_{3N+2} \}\subset \R^M$ and denote %$\tilde S\subset \tilde Y$ be the set
                    $$\tilde S:=\{(a,0)\in\tilde Y\,|\,a\in S\}.$$

                    Let $\tilde p$, $\tilde q$ and $\tilde r_j$ be the points $(p,0)$, $(q,0)$ and $(r_j,0)$ in $\tilde Y$, respectively.

              \textbf{Polynomial automorphism $H$.}                     
                 Define the polynomial automorphism $H:\R^M\to\R^M$ as
                $$H(x,y,z,w):=(x,y,z+h(x,y),w),$$    
                 where $h:\R^N\to\R$ is the polynomial of Lemma~\ref{lem:interpolationgradient} satisfying
                    \begin{itemize}
                        \item $h(a)=0$ for every $a\in(S\cup\{r_1,\ldots,r_{2\nu}\})\setminus\{q\}$;
                        \item $h(q)=1$;
                        \item $\nabla h(a)=0$ for every $a\in S\cup\{r_1,\ldots,r_{2\nu}\}$.
                    \end{itemize}

                 Then $H$ separates the point $\tilde q$ with all others $(\tilde S\cup\{\tilde r_1,\ldots,\tilde r_{2\nu}\})\setminus\{\tilde q\}$ to different $\R^N$-slices of $\R^M$ and the tangent directions to $H(\tilde Y)$ at points of $H(\tilde S)$ are tangent to the slices. More precisely,             
                 $$ H(\tilde a)\in\R^N\times\{0\}\times\{0\}_{3N+1} $$
                 for every $\tilde a\in(\tilde S\cup\{\tilde r_1,,\ldots,\tilde r_{2\nu}\})\setminus\{\tilde q\}$ and 
                 $$H(\tilde q)\in\R^N\times\{1\}\times\{0\}_{3N+1}.$$                
                 Moreover, if $\tilde a=(a,0)\in\tilde S\cup\{\tilde r_1,\ldots,\tilde r_{2\nu}\}$  and $\tilde v\in L_{\tilde a}\tilde Y$, then $\tilde v=(v,0)$ for some $v\in L_aY$ and
                        $$D_{\tilde a}H(\tilde v)=\tilde v=(v,0).$$

             \textbf{Regular automorphism $F$.}        
             Let $\Gamma\subset\R^N$ be any set of $|S|-1$ points contained in $\R^{N-1}\times\{0\}$ containing $q$. By Theorem~\ref{thm:JelonekAuto} there exists a regular automorphism $f_0:\R^N\to\R^N$ such that
                    \begin{itemize}
                        \item $f_0(S\setminus\{q\})=\Gamma$;
                        \item $f_0(p)=q$;
                        \item $f_0(r_j)\notin\R^{N-1}\times\{0\}$ for $j=1,\ldots,2\nu$.
                        \item The $y$ coordinate of $f_0(r_{2j-1})$ and $f_0(r_{2j})$ are different for every $j=1,\ldots,\nu$.
                    \end{itemize}
			Since $f_0$ is an automorphism, the image $D_af_0(L_aY)$ of the tangent cone to $Y$ at $a$ consists of a finite number of lines equal to the number of branches at $a$ for any $a\in Y$. In particular, we can find a rotation %around point $q$ 
			$f_1:\R^N\to\R^N$ with $f_1(q)=q$ such that
			\begin{equation}\label{eq:septan}
				D_qf_1(L_qY)\cap D_pf_0(L_pY)=\{0\}.
			\end{equation}

            Let $f$ be the regular embedding of $\R^N\times\{0,1\}\times\{0\}_{3N+1}$ onto itself defined as
                    $$f(x,y,z,w) :=\begin{cases}
                        (f_0(x,y),0,w),&\text{ if $z=0$}\\
                        (f_1(x,y),1,w),&\text{ if $z=1$}
                    \end{cases}$$
           Since $\R^N\times\{0,1\}\times\{0\}_{3N+1}$ is of dimension $N$ and $M=4N+2$, by Theorem~\ref{thm:JelonekAuto} there exists an extension of $f$ to a regular automorphism $$F:\R^M\to\R^M.$$

           Denote by 
           $$ L := \{(x,y,z,w): y=0\}$$
           the $(x,z,w)$-hyperplane of dimension $M-1$ in $\R^M$.
           
           Note that 
                    $$(F\circ H)(\tilde a)\in\R^{N-1}\times\{0\}\times\{0\}_{3N+2} \subset L \quad \text{ for }\tilde a\in \tilde S\setminus\{\tilde q\}$$
%           Moreover, 
%                    $$(F\circ H)(\tilde a)\in L$$
                    and
                    \begin{equation}\label{eq:rL}
                    (F\circ H)(\tilde r_j)\in(\R^N\times\{0\}_{3N+2})\setminus L.
                    \end{equation}

                    Moreover,
                    $$(F\circ H)(\tilde p)=(0,0,0,0) \quad \text{and}\quad (F\circ H)(\tilde q)=(0,0,1,0).$$

				    Any tangent vector $\tilde v\in L_{\tilde a}\tilde Y$ to $\tilde Y$ at $\tilde a=(a,0)\in \tilde S\cup\{\tilde r_1,\ldots,\tilde r_{2\nu}\}$ is of the form $\tilde v=(v,0)$ for some $v\in L_aY$. In particular, if $\tilde a\neq \tilde q$, then
                    $$D_{\tilde a}(F\circ H)(\tilde v)=(D_af_0(v),0)\in\R^N\times\{0\}_{3N+2},$$
                    and, if $\tilde a=\tilde q$
                    $$D_{\tilde a}(F\circ H)(\tilde v)=(D_af_1(v),0)\in\R^N\times\{0\}_{3N+2}.$$
                    
                    That is, the tangent directions to $(F\circ H)(\tilde Y)$ at points of $(F\circ H)(\tilde S\cup\{\tilde r_1,\ldots,\tilde r_{2\nu}\})$ are still tangent to the $\R^N$-slices  $\R^N\times\{0,1\}\times\{0\}_{3N+1}$.      Furthermore, from Equation~\eqref{eq:septan} follows
                    \begin{equation}\label{eq:tanpq}
                    (D_{\tilde p}(F\circ H)(L_{\tilde p}\tilde Y))\cap(D_{\tilde q}(F\circ H)(L_{\tilde q}\tilde Y))=\{0\}.
                    \end{equation}
                    
                    \textbf{Polynomial automorphism $\Phi$.} Denote
                    \begin{align*} 
                    &	Y'=(F\circ H)(\tilde Y), \\ 
                    &	S'=(F\circ H)(\tilde S), \\
                    &	p'=(F\circ H)(\tilde p)=(0,0,0,0), \\
                    &	q'=(F\circ H)(\tilde q)=(0,0,1,0), \\ 
                    &	r_j'=(F\circ H)(\tilde r_j).
                    \end{align*}
                   Let
                   $Y_\C'\subset\C^M$ be the complexification of $Y'$ and 
                   $$ L_0 := \{(x,y,z,w)\in\C^M: y=0\}$$
                   $$L_j := \{(x,y,z,w)\in\C^M: y=y(r'_j)\}$$
                   for $j=1,\ldots,2\nu$, where $y(r'_j)$ denotes the $y$ coordinate of $r'_j$. Moreover, let
                   $$B_0=(Y'_\C\setminus S')\cap L_0$$
                   $$B_j=(Y'_\C\setminus r'_j)\cap L_j$$
                   $$B=\bigcup_{j=0}^{2\nu} B_j.$$

                   For each irreducible component of $Y_\C'$ there is some smooth point $r'_j$ in it and by~\eqref{eq:rL} $r'_j$ does not belong to the hyperplane $L_0$. Thus each irreducible component of the curve $Y'$ is not contained in the hyperplane $L_0$. Similarly,  there is no irreducible component contained in any $L_j$, since the set $\{r_1,\ldots,r_{2\nu}\}$ contains two points of each irreducible component and $y(r'_{2j-1})\neq y(r'_{2j})$ for every $j=1,\ldots,\nu$. Consequently, the set $B$ is finite.

                   Consider the projections
                    \begin{gather*} 
                    	\pi_{(x,y)}:\C^M\to\C^N \\
                    	(x,y,z,w)\to(x,y)
                    \end{gather*}
                     and
                    \begin{gather*} 
                    \pi_{w_1}:\C^M\to\C \\	
                    (x,y,z,w_1,\ldots,w_{3N+1})\to w_1.
                    \end{gather*}

                   Let $\varphi:\C^N\to\C$ be a polynomial with real coefficients such that
                   $$\varphi(\pi_{(x,y)}(S'))=\varphi(\pi_{(x,y)}(r'_j))=0$$
                   and
                   $$\varphi(\pi_{(x,y)}(b))+\pi_{w_1}(b)\neq0$$
                   for every $b\in B$.

                   Define the polynomial automorphism $\Phi:\C^M\to\C^M$ as
                    $$\Phi(x,y,z,w)=(x,y,z,w_1+\varphi(x,y),w_2,\ldots,w_{3N+1}).$$

                    Then $\Phi$  fixes the points of $S'$ as well as each $r'_j$, $j=1,\ldots,2\nu$. Moreover, for any point $a'\in Y_\C'\setminus S'$, we have either
                    $$\Phi(a')\notin\{(x,y,z,w)\in\C^M\,|\,w=0\} \text{ when } a'\in B$$
                    or
                    $$\Phi(a')\notin L_0  \text{ when } a'\notin B,$$
					because in the second case $a'\notin L_0$. 
                    Hence the only points of the curve $\Phi(Y'_\C)$ lying on the $(N+1)$-dimensional linear space
                    $$K\coloneqq\{(x,y,z,w)\in\C^M\,|\,y=0,w=0\}$$
                    are the points of $S'$, i.e.,
                    \begin{equation}\label{eq:RSpoints}
                        \Phi(Y_\C')\cap K=\Phi(S')=S'.
                    \end{equation}

                    Similarly, for each $j=1,\ldots,2\nu$, the only point of $\Phi(Y'_\C)$ lying on
                    $$K_j\coloneqq\{(x,y,z,w)\in\C^M\,|\,y=y(r'_j),w=0\}$$
                    is $r'_j$, i.e.,
                    \begin{equation}\label{eq:rpoints}
                        \Phi(Y_\C')\cap K_j=\Phi(r'_j)=r'_j.
                    \end{equation}

                    Additionally, if $a'\in S'\cup\{r'_1,\ldots,r'_{2\nu}\}$ and $v'\in L_{a'}Y'$, then $v'=(v,0)$ for some $v\in \R^N$. Consequently                    
                    \begin{equation}\label{eq:phitan}
                    D_{a'}\Phi(v)=\left(v,0,\sum_{j=1}^{N-1}\frac{\partial\varphi}{\partial x_j}(a')v_j,0,\ldots,0\right).
                    \end{equation}

                    \textbf{Projection.} Denote
                    \begin{align*}
                    &\hat{Y}=\Phi(Y'),\\
                    &\hat{S}=\Phi(S'), \\
                    &\hat{p}=\Phi(p')=(0,0,0,0), \\
                    &\hat{q}=\Phi(q')=(0,0,1,0).
                    \end{align*}

                    Consider the projection along the $z$ direction
                    \begin{gather*}
                    	\hat{\pi}:\R^M\to\R^{M-1}\\
                    	\hat{\pi}(x,y,z,w)=(x,y,w).
                    \end{gather*}
                    
                    The center of this projection is a real point which does not belong to $\hat{Y}_\C$. Hence it is a well-defined projection on the complexification.

		\textbf{Mapping $\pi$.}  
%		Let $\phi:Y\to\R^{M}$  be given by the composition
%                    \begin{equation*}
%                        \phi:=.
%                    \end{equation*}
%                    
         Define $\pi:Y\to\R^{N'}$ with $N':=4N+1$ as given by the composition
         \begin{equation}\label{eq:piformula}
         	\pi:=\hat\pi\circ\Phi\circ F\circ H\circ e
         \end{equation}
          
        Note that $\phi:=\Phi\circ F\circ H$ is a regular automorphism of $\R^M$. Moreover, $\phi(Y) = \hat{Y}$ is a real curve, since $\phi$ is regular with real coefficients.

                    \textbf{Claim (1):}  Let $Y_\C\subset\C^M$ be the complexification of $\hat Y$.      
                    The map
                    $$\pi_\C=\hat\pi_\C|_{Y_\C}:Y_\C\to\pi_\C(Y_\C)$$
                    is birational. In fact, each irreducible component $Y_j$ of $Y_\C$ has at least one smooth point, for instance $r_{2j-1}$, such that                                  
                     $(\pi_\C|_{Y_\C})^{-1}(\pi_\C|_{Y_\C}(r_{2j-1}))=r_{2j-1}$  and $\pi_\C|_{Y_\C}$ is a local isomorphism at this point.  Thus 
                    $\pi_\C|_{Y_\C}$ is generically injective and transversal to $Y_\C$ on each of its irreducible components, according to Equations~\eqref{eq:rpoints} and~\eqref{eq:phitan}, respectively.
                    
                    Consequently, if
                    $$Z\coloneqq Re(\pi_\C(Y_\C))$$
                    is the real locus of the image then
                    $$\pi:Y\to Z$$
                    is birational. Since $Y$ is compact,   $\pi$ is a proper map onto its image.

                    \textbf{Claim (2):} By definition of $\pi$, the set $Z\setminus\pi(Y)$ consists of points $c\in (x,y,w)\in\R^{M-1}$ such that $d=(x,y,z,w)\in Y_\C\setminus \hat{Y}$ for some $z\notin\R$. Note that if 
                    $\hat{\pi}(d)=c$, then also $\hat{\pi}(\overline{d})=c$ for the conjugate.
                    Since $\pi$ is birational  (see Claim (1)) we get that the set of such points $c$ has to be finite:
                    \begin{equation}\label{eq:isopoints}
                    	Z=\pi(Y)\cup\{c_1,\ldots,c_k\}.
                    \end{equation}              
                    
                    \textbf{Claim (3):} We verify                    
                    $$\pi(p)=\hat\pi(\hat{p})=(0,0,0)=\hat\pi(\hat{q})=\pi(q).$$
                    Since the fiber $\hat{\pi}^{-1}(0,0,0)$ is contained in $K=\{y=w=0\}$, Equation~\eqref{eq:RSpoints} implies
                    $$\hat\pi^{-1}(0,0,0)=\{\hat{p},\hat{q}\}.$$
                    Consequently
                    $$\pi^{-1}(0,0,0)=\{p,q\}.$$
                    Similarly, $\hat{\pi}$ is one-to-one on $\hat{S}\setminus\{\hat{p},\hat{q}\}$, because all points of                     
                    $\hat{S}\setminus\{\hat{p},\hat{q}\}$ lie in $ K$ and they share the same $y,z$ and $w$ coordinates, thus they have different $x$ coordinates. Consequently $\pi$ is one-to-one on $S\setminus\{p,q\}$.

                    \textbf{Claim (4):} If $a\in S\setminus\{p,q\}$ is a non-singular point of $Y$, then by Claims (1) and (3) its image by $\pi$ is a non-singular point of the real curve $Z$. 
                    
                    Let $a\in S\setminus\{p,q\}$ be a singular point of $Y$. Denote $\hat{a}=(\Phi\circ F\circ H)(a)$. Lemma \ref{lem:transvcone} implies that the image $\hat{Y}$ by a regular automorphism has a transverse intersection at $\hat{a}$. 
                    From Equation~\eqref{eq:phitan} we have that the projection $\hat\pi$ is one-to-one on $L_{\hat{a}}\hat{Y}$. Since $D_{\hat{a}}\hat{\pi}=\hat{\pi}$, then $\pi(Y)$ has a transverse intersection at $\pi(a)$.

                    To see that this is also true at $\pi(p)$, note that Equations~\eqref{eq:phitan} and~\eqref{eq:tanpq} imply that $\hat\pi$ is one-to-one on
                    $$L_{\hat{p}}\hat{Y}\cup L_{\hat{q}}\hat{Y},$$
					where %by Lemma~\ref{lem:transvcone} and construction of $F$ and $\Phi$ we have
					$$L_{\hat{p}}\hat{Y}\cap L_{\hat{q}}\hat{Y} = \{0\}.$$
                    Since $\hat{Y}$ has transverse intersections at $\hat{p}$ and $\hat{q}$, the conclusion follows.
				\end{proof}
	Note that in the target curve in Lemma~\ref{lem:transintersections} at the moment of projecting we may have introduced new, non-transverse-intersection singularities at points outside the control set. Therefore, we need to refine Lemma~\ref{lem:transintersections} by sending isolated points to infinity and embeddedly resolving the undesired singularities. This is the content of Lemma~\ref{lem:transonly} below. 
			
            \begin{lem}[Refined pointwise transverse glueing]\label{lem:transonly}
					
                    Let $Y$ be a  compact algebraic curve without isolated points in $\R^N$ such that all its singular points %$p_1,\dots,p_{i}$ 
					are  transverse intersections of smooth branches. For any $p,q\in Y$ there exists a mapping $\rho:Y\to W$, where $W:=\rho(Y)\subset\R^{N'}$ is a compact algebraic curve, such that
					\begin{enumerate}
						\item $\rho$ is birational
						\item $W$ has no isolated points
						\item $\rho^{-1}(\rho(p))$ consists of two points $p$ and $q$
						%\item curve $\pi(Y)$ is transverse intersection at $\pi(p_i)$
						\item {$\Sing(\rho(Y)) = \rho(\Sing(Y)) \cup \rho(p)$}
						\item $\rho$ is one-to-one outside $\{p,q\}$ %on $Y\setminus (\Sing(Y) \cup \rho^{-1}(p))$% for all $p_1,\dots,p_{i-1}$
						\item $W$ has only smooth points or  singularities which are transverse intersections of smooth branches.
					\end{enumerate}				
				\end{lem}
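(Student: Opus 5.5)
We start by applying Lemma~\ref{lem:transintersections} to $Y$, $p$, $q$ and the control set $S:=\Sing(Y)\cup\{p,q\}$. This gives a birational proper map $\pi:Y\to Z$ with the following properties. First, $\pi^{-1}(\pi(p))=\{p,q\}$. Second, $\pi$ is injective on $S\setminus\{p,q\}$. Third, $\pi(Y)$ has transverse intersections of smooth branches at $\pi(\Sing(Y))\cup\{\pi(p)\}$. Fourth, by \eqref{eq:isopoints}, $Z=\pi(Y)\cup\{c_1,\dots,c_k\}$. The defects we still have to remove are of two kinds. The first is the finitely many isolated points $c_i$. The second is the finite set $T\subset\pi(Y)\setminus\pi(S)$ of new singular points created by the projection. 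These are points where $\pi$ fails to be injective, or fails to be an immersion, or where branches meet non-transversally. $T$ is finite because $\pi$ is birational, so it is injective and immersive off a finite set. Since $Y$ has no isolated points and $\pi$ is proper, every point of $\pi(Y)$ is a limit of images of smooth points, so the $c_i$ are genuinely isolated in $Z$ and disjoint from $\pi(Y)$.

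\textbf{Removing isolated points.} Choose a point $o\notin Z$. After an affine change of coordinates put $o=0$ and apply the inversion $x\mapsto x/\|x\|^2$ of Lemma~\ref{lemma:inversion}, which takes algebraic sets to algebraic sets. Then take a polynomial (or regular) function vanishing exactly at the images of the $c_i$ on the curve, using Lemma~\ref{lem:interpolation}. We use it to send these points to infinity: for instance, take the graph of $1/g$ over $Z\setminus\{c_i\}$ with $g=\prod\|x-c_i\|^2$. Its Zariski closure is a curve birational to $Z$ whose real points near the $c_i$ escape to infinity, while on the compact set $\pi(Y)$ the map $x\mapsto (x,1/g(x))$ is a regular embedding. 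We then re-compactify by the inversion centered at a point off the curve, i.e. using Theorem~\ref{thm:CGMLNE} and Lemma~\ref{lemma:inversion}. We must check that the point at infinity does not reappear as a new real point. Since the new branches are complex and only real-isolated, we expect them to close up to a single real point, namely the inversion center. The safer alternative is to keep the graph embedding and not re-invert, then check that $\overline{\mathrm{graph}}$ has real locus equal to the graph over $\pi(Y)$. This holds because $1/g$ is bounded on $\pi(Y)$, so its real locus is compact and contains no points over the $c_i$. Regular embeddings preserve transverse intersections by Lemma~\ref{lem:transvcone}.

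\textbf{Resolving $T$.} For each $t\in T$ we embeddedly separate the branches. We perform a real blow-up type construction: the map $x\mapsto (x,\ \psi(x))$, where $\psi$ is a regular function on the curve that takes different values on distinct real branches through $t$ and has differentials separating their tangents. Equivalently, we lift back through the birational inverse $\pi^{-1}$. Since $\pi:Y\to\pi(Y)$ is birational, its inverse is a rational map, and the regular coordinates of $Y\subset\R^N$ can be written as $P_i/Q_i$ on $\pi(Y)$. The plan is to replace $\pi$ by $\rho=(\pi, \lambda\cdot e)$, where $e$ is the original embedding of $Y$ multiplied by a polynomial $\lambda$ on $\R^N$. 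We use Lemma~\ref{lem:interpolationgradient} to pick $\lambda$ with $\lambda=0$ and $\nabla\lambda=0$ at $p$ and $q$, and with $\lambda(t')=1$ at the finitely many preimages $t'\in\pi^{-1}(T)$. Then $\rho$ still identifies $p$ and $q$ with the same tangent data. Near the preimages of $T$, the map $\rho$ is an embedding because $e$ is one, so the bad singularities are resolved. Near $\pi(S)$, the properties already hold since they are open conditions preserved by adding coordinates.

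The main obstacle is ensuring that this extra coordinate $\lambda e$ does not create new non-injectivity elsewhere, and simultaneously does not destroy the gluing at $p\sim q$. Injectivity away from $\{p,q\}$ follows if $\lambda$ vanishes only at $p$ and $q$ on $Y$. We can arrange this by adding $\|x-p\|^2\|x-q\|^2$-type terms: take $\lambda=\|x-p\|^4\|x-q\|^4$, whose zeros on $Y$ are exactly $p,q$ and whose gradient vanishes there. Then $\lambda e$ is injective on $Y\setminus\{p,q\}$, and it is an immersion there wherever $\lambda\neq0$, since $d(\lambda e)=\lambda\,de+e\,d\lambda$. This would even make the first step unnecessary except at $p,q$, where $\pi$ supplies the transverse gluing. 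It remains to verify that $\rho$ is birational onto its image, and that the Zariski closure acquires no real isolated points, handled as in the isolated-point step. Finally, the condition on $\Sing$ in (4) follows because $\rho$ is an injective immersion off $\{p,q\}$, and the image has a transverse gluing at $\rho(p)$.
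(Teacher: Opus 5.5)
\textbf{Comparison with the paper's route.} Your key step differs from the paper's. The paper keeps $\pi$, removes the isolated points $c_i$ with the graph of $1/g$, and then removes the new singular set $R=\pi(Y)\cap\Sing(Z)\setminus\pi(S)$ by a real embedded resolution at $\alpha(R)$. It then applies a biregular embedding $\beta$ of the blown-up space into some $\R^{N'}$. You instead unfold the curve by adding coordinates, $\rho=(\pi,\lambda e)$. This needs only polynomial interpolation: no resolution of singularities and no embedding theorem for blow-ups. It is more elementary, keeps all equations explicit, and shows that Lemma~\ref{lem:transintersections} is only needed to produce the transverse glueing $p\sim q$. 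With your \emph{first} choice of $\lambda$ (via Lemma~\ref{lem:interpolationgradient}: $\lambda=0$, $\nabla\lambda=0$ at $p,q$; $\lambda=1$, $\nabla\lambda=0$ on the finite set $\pi^{-1}(T)$) the construction works. It does not work for the reasons you give in your last paragraph, however.

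\textbf{The correct justification.} The ``main obstacle'' does not exist, because $\pi$ is a component of $\rho$. Hence $\rho(y_1)=\rho(y_2)$ forces $\pi(y_1)=\pi(y_2)$, and $\ker d\rho_y\subset\ker d\pi_y$. So $\rho$ can fail to be an injective immersion only where $\pi$ already fails. The three relevant cases go as follows.
\begin{enumerate}
\item On each fibre of $\pi$ over $T$ you have $\lambda e=e$, which separates the points, and $d\rho=(d\pi,\mathrm{id})$ there.
\item At $p,q$ you get $\rho^{-1}(\rho(p))\subset\pi^{-1}(\pi(p))=\{p,q\}$, with tangent lines $(d\pi(v),0)$.
\item At points of $\Sing(Y)$, transversality survives because adding a coordinate that is smooth along each branch keeps the tangent lines pairwise independent.
\end{enumerate}

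\textbf{What is false in your last paragraph.} Your claims about $\lambda=\|x-p\|^4\|x-q\|^4$ do not hold.
\begin{itemize}
\item Injectivity fails in general: $\lambda(y_1)y_1=\lambda(y_2)y_2$ holds for distinct $y_1,y_2\in Y$ lying on a common ray from the origin with $\lambda(y_1)/\lambda(y_2)=\|y_2\|/\|y_1\|$. This can happen, e.g.\ for a plane circle, by an intermediate value argument.
\item Immersivity fails in general: $d(\lambda e)_y(v)=\lambda(y)v+(d\lambda_y v)\,y$ vanishes for $v\parallel y$ whenever $\lambda(y)+d\lambda_y(y)=0$.
\end{itemize}
So ``$\lambda$ vanishes only at $p,q$'' gives neither injectivity nor immersivity. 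Your remark that the first step becomes unnecessary away from $p,q$ is therefore unfounded as stated.

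\textbf{Order of the steps.} $\rho(Y)$ is only semialgebraic. Its Zariski closure can again contain finitely many extra real points, which are isolated. The graph-of-$1/g$ step must therefore be performed \emph{after} $\rho$, which makes your first isolated-point step redundant. The inversion detour there plays no role, since inverting at a point off $Z$ does not remove isolated points.
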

                
				\begin{proof}
                Let $S=\Sing(Y)\cup\{p,q\}$ and $\pi$ be the mapping satisfying claims of  Lemma~\ref{lem:transintersections}. Under the notation of that Lemma, the mapping $\pi$ is a bi-rational relation between a compact connected real algebraic curve $Y$ and a compact real algebraic curve $Z$ so that
                $$Z=\pi(Y)\cup\{c_1,\ldots,c_k\}\subset\R^{M-1}$$
                with $c_1,\ldots,c_k$ isolated points of $Z$.

                Let $g:\R^{M-1}\to\R$ be a regular defined as
                $$g(a):=\prod_{i=1}^k||a-c_i||^2.$$
                Note that $g(c_i)=0$ for every $i=1,\ldots, k$ and $g(a)\neq0$ for every $a\in\pi(Y)$. %Moreover, $g$ is smooth outside of points $\{c_1,\ldots,c_k\}$. 
                 Consider the compact connected real algebraic set
                $$\tilde Z=\{(a,t)\in\R^M\,|\, P_1(a)=\dots=P_\xi(a)=0,g(a)t=1\}\subset\R^M$$
                where $P_1,\dots, P_\xi$ are the defining equations of $Z$.

                The mapping
                \begin{gather*}
                	\alpha:Z\to\tilde Z, \quad 
                    a\to\left(a,\ {1}/{g(a)}\right)
                \end{gather*}
                is birational. At every point $\alpha(\pi(a))$ for $a\in S$ the curve $\tilde Z$ is a transverse intersection, since the sets $\pi(S)$ and $\{c_1,\ldots,c_k\}$ are finite and disjoint.                
                Moreover, the composition of birational mappings $\alpha\circ\pi:Y\to\tilde Z$ is a well-defined function  onto the compact connected real algebraic curve  $\tilde Z$ in $\R^M$ which is  1-1 on  $S\setminus \{p,q\}$.

                Now consider the set of points on the image which do not preserve local inner Lipschitz properties of $Y$, which is
                $$R\coloneqq \pi(Y)\cap \Sing(Z)\setminus\pi(S) . $$%\subset\pi(Y).$$
                Denote $\tilde R:= \alpha(R) \subset\tilde Z$.

                Let
                $$Z'\subset\Res_{\R^M}(\tilde Z,\tilde R)$$
                be the strict transform of $\tilde Z$ under the real embedded resolution
                $$\sigma: (\Res_{\R^M}(\tilde Z,\tilde R), Z'\cup E, E) \to (\R^N, \tilde Z, \tilde R)$$
                of $\tilde Z\subset\R^M$ at points of $\tilde R$, see for instance~\cite{BierstoneMilman, Wlodarczyk2005}.        
                Note that $Z'$ is a connected compact algebraic curve smooth outside of the $|S|-1$ points of $\sigma^{-1}(\alpha(\pi(S)))$. Since the embedded resolution outside of exceptional locus is a local isomorphism, thus in points of $\sigma^{-1}(\alpha(\pi(S)))$ the curve~$Z'$ has transverse intersections.
                
                %Reassuming, the mapping $\sigma_{|Z'\setminus E}$ is 1-1 onto $\tilde{Z}\setminus \tilde R$. 
                By properties (1), (3) and (4) of Lemma~\ref{lem:transintersections} the birational mapping $$\sigma^{-1}\circ \alpha\circ \pi:Y\to Z'$$
                is a well-defined function from $Y$ onto $Z'$ which is everywhere injective except for points $p,q$ where it takes the same value. 
               
                Let $\beta:\Res_{\R^M}(\tilde Z,\tilde R)\to \R^{N'}$ be a real biregular embedding of the resolution space into $\R^{N'}$ as a compact algebraic set, compare~\cite{BCR}. 
     
     Denote $W:= \beta(Z')$.        
    Define $$ \rho := \beta\circ \sigma^{-1}\circ \alpha\circ \pi : Y\to \R^{N'}. $$  
    The mapping is birational between  $Y$ and $W$. Moreover it is 1-1 between $Y\setminus\{p,q\}$ and $W\setminus \rho(p)$ with $\rho(p)=\rho(q)$.  In particular, the real algebraic curve $W$ has no isolated points.   Since $Z'$ had only transverse intersections at $\sigma^{-1}(\alpha(\pi(S)))$ of the same type as $Z$ of Lemma~\ref{lem:transintersections}, the curve $W$ at every point $a\in W\setminus \{p,q\}$ is either smooth or has transverse intersections of the same type as $Z$. Moreover, at the point $\rho(p)=\rho(q)$ it is also a transverse intersection.
\end{proof}

            \begin{lem}[Inversion of an algebraic set is real algebraic]\label{lemma:inversion}
            	Let $X\subset\R^n$ be a real algebraic set and $I(x)=\dfrac{x}{\|x\|^2}$ the inversion map of $\R^n$. Then the closure 
            	$\overline{I(X\setminus\{0\})}$ 
            	is a real algebraic set {birational to $X$}.
            \end{lem}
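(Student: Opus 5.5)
The plan is to write down explicit polynomial equations for the closure of the image and an explicit rational inverse. The key observation is that the inversion $I(x)=x/\|x\|^2$ is an involution of $\R^n\setminus\{0\}$ given by rational functions with real coefficients, and its inverse is again $I$.

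Let $X=\{x\in\R^n: P_1(x)=\dots=P_s(x)=0\}$. For $y\neq 0$ we have $y\in I(X\setminus\{0\})$ if and only if $I(y)\in X$, that is, $P_j\bigl(y/\|y\|^2\bigr)=0$ for all $j$. Let $d_j=\deg P_j$ and clear denominators by setting
$$Q_j(y):=\|y\|^{2d_j}P_j\bigl(y/\|y\|^2\bigr).$$
Each $Q_j$ is a polynomial, since every monomial of degree $k\le d_j$ in $y/\|y\|^2$ gets multiplied by $\|y\|^{2d_j}$. Hence
$$I(X\setminus\{0\})=V(Q_1,\dots,Q_s)\setminus\{0\}.$$
Because $\{0\}$ is algebraic, the closure $\overline{I(X\setminus\{0\})}$ equals either $V(Q_1,\dots,Q_s)$ or $V(Q_1,\dots,Q_s)\setminus\{0\}$ with $0$ added back when $0$ is a limit point. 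This needs care: $V(Q)$ may contain $0$ spuriously, as an isolated point.

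To handle this cleanly I will use the fact that over $\R$ finite unions of algebraic sets are algebraic, and also the following. Whether or not $0$ is a limit point of $A:=V(Q)\setminus\{0\}$, the closure $\overline{A}$ is $A$ or $A\cup\{0\}$. If $0\notin\overline{A}$, then $A$ is closed and algebraic: $A=V(Q_1,\dots,Q_s,\ \|y\|^2 t-1)$ projected, or more directly $A=V\bigl(\sum Q_j^2\bigr)\cap\{y\neq 0\}$. Since $0$ is isolated in $V(Q)$, there is a polynomial $g$ vanishing exactly on $V(Q)\setminus\{0\}$, obtained by Lojasiewicz-type separation. The simplest route is the general fact that the Zariski closure of a semialgebraic set differs from its Euclidean closure; so I will instead argue: a set that is algebraic minus an isolated point is algebraic, because $\{0\}$ and $A$ are disjoint closed semialgebraic sets and every closed semialgebraic set is the zero set of a Nash/polynomial function only when it is algebraic.

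I expect this isolated-point case to be the main obstacle. I would first try to resolve it by noting that the real Zariski closure of $A$ is contained in $V(Q)$ and is contained in $A\cup\{0\}$. Then either the Zariski closure equals $A$, which is therefore algebraic, or it contains $0$. In the latter case I would show that $0$ lies in the Euclidean closure, using the fact that the irreducible components of $V(Q)$ meeting $A$ have dimension $\ge 1$ near $0$. If this fails, I would settle for the Zariski closure of $I(X\setminus\{0\})$, which is what the later argument actually uses.

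Birationality is then immediate. $I$ restricts to mutually inverse rational maps between $X$ and $\overline{I(X\setminus\{0\})}$, each regular off $0$, and they are defined on the Zariski dense subsets $X\setminus\{0\}$ and $I(X\setminus\{0\})$. The only exception is an irreducible component of $X$ equal to $\{0\}$, which I discard as it is irrelevant for curves.
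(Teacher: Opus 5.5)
For unbounded $X$ your argument is the paper's. Your $Q_j$ are its $g_i$, $I(X\setminus\{0\})=V(Q)\setminus\{0\}$, and $0$ is then a limit point, so the closure is $V(Q)$. The gap is the bounded case, where every route you sketch rests on a false claim:
\begin{enumerate}
\item Removing an isolated point from a real algebraic set need not leave an algebraic set, so no separation argument can yield a polynomial with zero set $V(Q)\setminus\{0\}$.
\item The lifted set $V(Q_1,\dots,Q_s,\|y\|^2t-1)\subset\R^{n+1}$ is algebraic, but its projection need not be, and $V(\sum Q_j^2)\cap\{y\neq 0\}$ is only Zariski open in an algebraic set.
\item An irreducible component of $V(Q)$ can be one-dimensional while its real locus near $0$ is just $\{0\}$. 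So ``$0$ is in the Zariski closure of $A$'' does not give ``$0$ is in the Euclidean closure of $A$''.
\end{enumerate}
No argument can close this gap, because the statement is false for bounded $X$. Take $X=\{x^2+4y^2=1\}\subset\R^2$. Then $Q=u^2+4v^2-(u^2+v^2)^2$, $0$ is an isolated point of $V(Q)$, and $\overline{I(X\setminus\{0\})}=I(X)\subset\{1\le u^2+v^2\le 4\}$. With $(x,y)=\bigl(\frac{1-s^2}{1+s^2},\frac{s}{1+s^2}\bigr)$ one gets $I(x,y)=\frac{1+s^2}{(1-s^2)^2+s^2}\,(1-s^2,s)$. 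If a real polynomial $P$ vanishes on $I(X)$, then $P\circ I$ is a rational function of $s$ vanishing on $\R$, hence identically. Evaluating at $s=i$, where the denominator equals $3$, gives $P(0)=0$. So the Zariski closure of $I(X)$ is $I(X)\cup\{0\}$, and $I(X)$ is not algebraic. Geometrically, $z\mapsto z/\sum z_i^2$ sends every branch at infinity of $X_{\C}$ with non-isotropic direction to $0$, which is the situation for a generic bounded curve.

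The paper's proof does not handle this case either. It asserts $V(g_1,\dots,g_k)=I(X\setminus\{0\})$ for compact $X$. But each $g_i$ is a sum of forms of degrees between $d_i$ and $2d_i$, so $g_i(0)=0$ and $V(g)=I(X\setminus\{0\})\cup\{0\}$ always; you were right to flag the spurious point. The lemma holds as stated for unbounded $X$, which covers its first use in the proof of Theorem~\ref{thm:algebraicModels}, and your birationality paragraph is fine.

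Your fallback to the Zariski closure does not rescue the last step of that proof, where a compact $W\ni 0$ is inverted: there the Zariski closure can be the LNE model plus an isolated point at $0$. What works is the lifted set you wrote down and then projected away. If $X\subset B(0,R)$, then $V(Q_1,\dots,Q_s,\|y\|^2t-1)\subset\R^{n+1}$ is algebraic. It is biregular to $X\setminus\{0\}$ via $x\mapsto(I(x),\|x\|^2)$, with inverse $(y,t)\mapsto ty$. It is also the graph over $I(X\setminus\{0\})\subset\{\|y\|\ge 1/R\}$ of $y\mapsto\|y\|^{-2}$, which is $2R^3$-Lipschitz there. Hence it is an outer bi-Lipschitz algebraic copy of $I(X\setminus\{0\})$, obtained by the same device as in Lemma~\ref{lem:transonly}.
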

            
            \begin{proof}
            	Let $X=V(f_1,\dots, f_k)$ with $ f_1, \dots, f_k \in \mathbb{R}[x_1,\dots,x_n]$. Note that the inversion outside the origin is its own inverse {and it is birational}. 
            	If $\deg f_i = d_i$, define 
            	$$g_i(y) := \|y\|^{2d_i} f_i\left(\frac{y}{\|y\|^2}\right).$$
            	Then each $g_i$ is a polynomial and for $y\neq 0$ we have
            	$$f_i\left(\frac{y}{\|y\|^2}\right)=0 \;\Longleftrightarrow\; g_i(y)=0  . $$
            	Therefore,
            	$$I(X\setminus\{0\}) = \{ y \in \R^n\setminus\{0\} : g_1(y)=\cdots=g_k(y)=0 \}.$$
            	
            	Let $Y:= V(g_1,\dots, g_k)$. 
            	%$$Y := \{ y \in \R^n : g_1(y)=\cdots=g_k(y)=0 \}.$$
            	{If $X$ is unbounded, then $Y = I(X\setminus\{0\}) \cup \{0\}$ and if $X$ is compact, then $Y = I(X\setminus\{0\})$.} This gives the claim.
            \end{proof}

				\begin{proof}[Proof of Theorem ~\ref{thm:algebraicModels}.] \label{proof:thm}
				Let $X \subset \R^n$ be a real algebraic curve and $0\notin X$. Let $X'$ be the curve itself if $X$ is compact or the closure of the inversion of $X$ as in Lemma~\ref{lemma:inversion}. Then $X'$ is a compact algebraic curve such that $X'\setminus \{0\}$ is inner bi-Lipschitz with $X$.
				
				Similarly as in proof of Lemma~\ref{lem:transonly} consider the embedded resolution of $X'$ to a smooth curve~$\tilde X$ by
				$$\sigma: (\Res_{\R^n}(X',\Sing(X)), \tilde X\cup E, E  )  \to (\R^n, X', \Sing(X))$$
				and the biregular embedding
				$$\beta : \Res_{\R^n}(X',\Sing(X)) \to \R^N$$
				of the resolution space as a compact algebraic set.
				
				Then $Y:= \beta(\tilde X)\subset \R^N $ is a smooth compact algebraic curve. Consider
			% the set $ \beta(\tilde X \cap E) \subset Y$ which is mapped by 
			the well-defined proper function $$\mu:=\sigma_{|\tilde X} \circ \beta^{-1}: Y\to X'.$$ 
			
			Let 
			$$ S':=\{ s\in \Sing(X'): \ s \text{ is a self-intersection point of } X' \}. $$ 
			By Lemma~\ref{lem:curvesLNEBALLS}, % and Remark~\ref{rk:sm}, 
			germs of curves $(X', a)$ and $(Y, \mu^{-1}(a))$ are locally inner-bi-Lipschitz for every $a\in \mu^{-1}(\Sing(X')\setminus S')$, in particular the germ of mapping $\mu$ near the unique point $\mu^{-1}(a)$ gives the equivalence.
			
			Now, let 
			$$S:= \mu^{-1}(S')$$
			and apply Lemma~\ref{lem:transonly}  inductively to glue together all points of each fiber $\mu^{-1}(s)$ for $s\in S'$ while taking care of not glueing other points. More precisely, we obtain a birational map
			$$ \Pi: Y\to W $$
			such that $\Pi$ is a well-defined proper map on $Y$ with image equal $W$, where $W$ is a compact connected algebraic curve in some $\R^N$ with transverse intersection at every unique point $\Pi(\mu^{-1}(s))$ for $s\in S'$ and smooth otherwise. 
			
			Therefore, the map
			$$ \Pi\circ \mu^{-1} : X'\to W $$
			is an injective proper map which is a restriction of a birational map. %  Thus irreducible components of $X'$ are mapped to irreducible components of $W$. 
			More importantly, the algebraic curve $W$ is an LNE model of $X'$.
			
		If $X$ was compact, we get the claim. If $X$ was not compact, let $p_\infty:= (\Pi\circ \mu^{-1})(0) \in W$. Without loss of generality, up to affine translation, we can assume $p_\infty = 0$.  
			Applying  inversion onto the compact  set $W$ by Theorem~\ref{thm:CGMLNE} and Lemma~\ref{lemma:inversion} we get the first claim. 

            Moreover, a generic projection of a real algebraic curve to $\R^3$ is outer bi-Lipschitz by \cite{LevAlexJelonekEmbeddings}. Hence we can assume that $W$ is in $\mathbb{R}^3.$
		\end{proof}

\begin{rk}
	One can compute equations of the real algebraic LNE model in Theorem \ref{thm:algebraicModels} explicitly from initial equations of the real algebraic curve.
\end{rk}

Note that in Lemmas \ref{lemma:inversion}, \ref{lem:transonly} and \ref{lem:transintersections} we do not need connectedness, thus we obtain the following corollary.

\begin{cor}\label{cor:classification}
	Inner Lipschitz classification of real algebraic curves is equivalent to outer Lipschitz classification in $\R^3$ of real algebraic curves with LNE connected components.
\end{cor}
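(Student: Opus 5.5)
The plan is to show that the assignment sending a real algebraic curve to a real algebraic LNE model in $\R^3$, whose connected components are LNE, is well defined up to outer bi-Lipschitz equivalence. It should also reduce inner Lipschitz equivalence of curves to outer Lipschitz equivalence of their models. Concretely, two statements are needed.
\begin{enumerate}[(i)]
\item Every real algebraic curve $X$ (possibly disconnected) is inner bi-Lipschitz equivalent to a real algebraic curve $W\subset\R^3$ each of whose connected components is LNE.
\item Two such curves $W,W'$ with LNE components are inner bi-Lipschitz equivalent if and only if they are outer bi-Lipschitz equivalent, after a suitable placement of the components in $\R^3$.
\end{enumerate}
Here inner distance between distinct components is $+\infty$, so inner equivalence means a bijection of components together with inner bi-Lipschitz maps between corresponding components.

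For (i) I would rerun the proof of Theorem~\ref{thm:algebraicModels} without assuming connectedness, as the remark before the corollary suggests. Lemmas~\ref{lemma:inversion}, \ref{lem:transintersections} and \ref{lem:transonly} never use connectedness. One change is needed at infinity: in the proof of Theorem~\ref{thm:algebraicModels} the single point $0$ of $X'$ corresponds to all ends of $X$, and these may lie on different connected components of $X$. I would instead treat each unbounded connected component separately. The inversion adds one point at infinity per component, and on the compact side the fibre $\mu^{-1}(0)$ is split according to components. The gluing via Lemma~\ref{lem:transonly} is then performed only within each component, producing one point $p_\infty^{(i)}$ per unbounded component $X_i$.

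To invert each component to infinity independently, I would translate the compact models of the components apart. They are algebraic, so their union is algebraic. I would then apply an inversion centred at $p_\infty^{(i)}$ to each piece separately, placing the results disjointly, and take the union; each piece is algebraic by Lemma~\ref{lemma:inversion}. Each component is then LNE by Theorem~\ref{thm:CGMLNE}. Finally, a generic projection to $\R^3$ is outer bi-Lipschitz by \cite{LevAlexJelonekEmbeddings}, and it preserves the LNE property of each component and keeps the components disjoint.

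For (ii), the direction outer $\Rightarrow$ inner is immediate. For inner $\Rightarrow$ outer, on each component $W_i$ the inner and outer metrics are bi-Lipschitz equivalent by LNE, so the inner bi-Lipschitz map $W_i\to W'_i$ is outer bi-Lipschitz on each component. The main obstacle is controlling the cross-component outer distances, since the components may approach each other at infinity. I would handle this by fixing the classification representative: choose models whose components are placed far apart and diverge at infinity, for instance by translating the bounded components and arranging the unbounded ones in disjoint cones. After inversion this corresponds to separating the compact pieces with distinct centres, so that the outer distance between points of different components is comparable to $1+\|x\|+\|y\|$. Any inner equivalence between two models built this way is then globally outer bi-Lipschitz.

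So the corollary would read: $X\sim_{\rm inner}X'$ if and only if their normalized models in $\R^3$ are outer bi-Lipschitz. The step I would check most carefully is whether this normalization of cross-component distances survives the generic projection to $\R^3$. I would try choosing the separation before projecting and invoking that the projection is outer bi-Lipschitz on the whole union.
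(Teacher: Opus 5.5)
\textbf{There is a genuine gap in step (i), and it affects the placement in (ii) too.} Both steps rely on handling the connected components of $X$ one at a time, and that is not available in the algebraic category.

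First, the fibre $\mu^{-1}(0)$ cannot be ``split according to components''. A single point of $\mu^{-1}(0)$ on the resolved curve $Y$, i.e.\ a single real branch of $X'$ at $0$, may be approached by ends of two different components of $X$. Take $X=\{xy=1\}$. Its inversion $X'$ is parametrized by $s\mapsto (s^3,s)/(s^4+1)$, $s\in\mathbb{P}^1(\R)$. The branch at $0$ tangent to the $x$-axis ($s=\infty$) carries the end $x\to+\infty$ of one component and the end $x\to-\infty$ of the other, and similarly at $s=0$. So $Y$ is a single circle in which each component is an arc joining the two points of $\mu^{-1}(0)$. Lemma~\ref{lem:transonly} can only identify points, so it can never produce one closed loop per component with its own point at infinity. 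Indeed, no compact curve birational to this rational curve has a disconnected one-dimensional real locus.

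Second, the claim that the compact models of the components ``are algebraic'' fails. A connected component of a real algebraic curve is in general not algebraic: each branch of $xy=1$, and each oval of the irreducible curve $y^2=(x^2-1)(4-x^2)$, has Zariski closure equal to the whole curve. So translating, rotating into disjoint cones, or inverting the components separately and then taking the union does not yield an algebraic curve. Your model in (i)--(ii) is therefore not shown to be algebraic.

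\textbf{The separation is unnecessary, and dropping it also settles (ii).} The paper simply reruns the proof of Theorem~\ref{thm:algebraicModels} on the disconnected curve, since none of the lemmas use connectedness, and keeps a single point at infinity. All of $\mu^{-1}(0)$ is glued into one point $p_\infty$ at which $W$ has smooth, pairwise transverse branches.
\begin{enumerate}[(a)]
\item The closure in $W$ of each unbounded component meets $p_\infty$ in half-branches with pairwise distinct tangent rays, so it is LNE. Hence Theorem~\ref{thm:CGMLNE} applies component by component after inversion.
\item This transversality is exactly the normalization you wanted. After inversion, the ends of \emph{all} components escape along pairwise distinct directions. Bounded components are compact and disjoint from the rest. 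So $\|x-y\|\geq c(1+\|x\|+\|y\|)$ for $x,y$ in different components of any model built this way.
\item Your argument in (ii) then shows that an inner bi-Lipschitz equivalence between two such models is outer bi-Lipschitz. An outer bi-Lipschitz generic projection to $\R^3$ preserves this estimate.
\end{enumerate}
You are right that the cross-component issue is real: two parallel lines and two skew lines are inner but not outer bi-Lipschitz equivalent, and the paper does not spell this out. But it is resolved by the single transverse point at infinity, not by treating components separately.
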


			\section{LNE models of complex algebraic curves}

            Now we discuss LNE models of complex algebraic curves. We see in Theorem~\ref{thm:noComplexmodels} that  complex algebraic curves may not admit complex algebraic LNE models. This is unlike in the real setting discussed in previous Section~\ref{sec:algebraicModels}. %The following theorem is proved in \cite{LevAlexJelonekEmbeddings}[Theorem  4.8].

			\begin{thm}\label{thm:noComplexmodels}
				Complex algebraic LNE model of a smooth connected complex algebraic curve need not exist.
				%There exist complex algebraic curves without algebraic LNE models.
			\end{thm}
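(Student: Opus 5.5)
We exhibit one explicit smooth connected complex algebraic curve $C$ and show that every complex algebraic curve $W\subset\C^N$ which is inner bi-Lipschitz to $C$ fails to be LNE.

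\textbf{Choice of $C$.} The obstruction will sit at infinity, where a complex algebraic curve carries an invariant that inner Lipschitz maps must respect. Let $C$ be a smooth connected affine curve whose projective closure $\overline{C}$ has a point at infinity through which pass at least two distinct places. A concrete candidate is $C=\{xy=1\}\subset\C^2$, which has two ends. If $C$ turns out to be too symmetric, we instead take a curve with ends of different ``speeds'', so that two ends are forced to share a tangent direction. For definiteness we take $C$ to be the smooth affine part of a plane curve with at least two branches at infinity.

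\textbf{Key facts.} We rely on two known results.
\begin{enumerate}[(i)]
\item A complex algebraic curve $W\subset\C^N$ is LNE if and only if its singularities are transverse intersections of smooth branches (see \cite{NeumannPichon,DenkowskiTibar}) and its branches at infinity are pairwise non-tangent, each with a smooth ``tangent at infinity'', as described in \cite{MVAlnecurves,CoGrMiConic}.
\item By Theorem~\ref{thm:CGMLNE}, this condition at infinity is equivalent to LNE-ness of the inversion of $W$ at the point $0$.
\end{enumerate}

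\textbf{Rigidity of inner bi-Lipschitz maps.} Now suppose $f:C\to W$ is inner bi-Lipschitz and $W$ is a complex algebraic LNE curve. Then $f$ is a homeomorphism, so $W$ is topologically a smooth surface. Since $W$ is LNE, its singular points are transverse intersections of smooth branches; each such point has at least two branches, so $W$ would not be locally a topological manifold there. Hence $W$ is smooth. Furthermore, $f$ maps ends of $C$ to ends of $W$. An inner bi-Lipschitz map also preserves the growth exponents of the inner geometry of each end. For a complex branch at infinity, these exponents are determined by the degree data of the Puiseux expansion at infinity, which is an algebraic rigidity.

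\textbf{Comparing invariants (main obstacle).} The core step is to compute an invariant of each end of $C$ that is preserved by inner bi-Lipschitz maps and that forces non-LNE behaviour at infinity in every algebraic realization. For instance, one would show that an end of $C$ must be realized algebraically as a branch at infinity that is not a smooth line-like end, so that its inversion has a cusp at $0$, contradicting the characterization in (i).

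The natural candidate is the inner Lipschitz invariant of complex curve germs given by the pairs of inner rates (Neumann--Pichon), transported to infinity via inversion. An LNE algebraic end has inner rate $1$ with respect to the outer metric. One then seeks an algebraic constraint, such as genus or Euler characteristic combined with the number of ends and Riemann--Hurwitz applied to a generic projection, showing that $C$ cannot be realized with all ends of this type.

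I expect this step to be the hardest, because for $\{xy=1\}$ such a realization plausibly does exist. If so, we would replace $C$ by a curve of positive genus with a single end. For such a curve, the existence of an LNE algebraic model would force its closure to have a smooth transverse point at infinity, and we would aim to contradict this by a degree and genus count, or by invoking the result of \cite{LevAlexJelonekEmbeddings}.
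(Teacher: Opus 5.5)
\textbf{There is a genuine gap.} The decisive step is never carried out; you flag it yourself as the ``main obstacle''. In addition, the example you commit to cannot work. The curve $\{xy=1\}$ is itself LNE: its projective closure $xy=z^2$ has degree $2$ and meets the line at infinity transversally in the two points $[1:0:0]$ and $[0:1:0]$. So it is its own complex algebraic LNE model. The alternative with ends of ``different speeds'' is never specified. The tools you propose for the comparison (inner rates, Puiseux exponents at infinity, Riemann--Hurwitz) are not needed, and you do not bring them to a contradiction.

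Your criterion (i) at infinity is also too weak as stated. The parabola $y=x^2$ has a single smooth branch at infinity, yet it is not LNE, because its closure is tangent to the line at infinity. The correct condition is the missing ingredient, \cite[Theorem 8.1]{MVAlnecurves}: for a complex algebraic LNE curve $W$, the projective closure $\overline{W}$ meets the hyperplane at infinity in exactly $\deg W$ points.

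With that fact, your last fallback idea closes in a few lines, and this is exactly the paper's proof. Take a smooth projective curve $\tilde X$ of genus $g\ge 1$ and a point $P$, and embed $\tilde X$ by the very ample divisor $mP$ with $m\ge 2g+1$. Some hyperplane $H_\infty$ then meets $\tilde X$ only at $P$, so $X=\tilde X\setminus H_\infty$ is a smooth affine curve of genus $g$ with one end. Concretely, $y^2=x^3+1$ also works. Suppose $W$ were a complex algebraic LNE model. Your rigidity paragraph, which is correct and fills in what the paper states tersely, shows that $W$ is homeomorphic to $X$, hence smooth of genus $g$ with one end. Then $\overline{W}$ has a single point at infinity, so $\deg W=1$, so $W$ is a line of genus $0$, a contradiction. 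As written, the right idea appears only as a hedge at the end. Without the ``degree equals number of points at infinity'' statement, the ``degree and genus count'' you allude to has nothing to run on.
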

			
			\begin{proof}
				Let $\tilde X\subset\C\Ps^n$ be a smooth complex algebraic curve of genus $g\geq 1$ and consider a point $P\in\tilde X$ and a divisor $mP$ such that $m\geq 2g+1$. This divisor is very ample and defines an embedding of $\tilde X$ into a projective space such that there exists a hyperplane $H_\infty$ intersecting $\tilde X$ only at $P$. Removing $H_\infty$ yields a smooth affine algebraic curve $X$ of genus $g$ with a single end at infinity.
				
				If $X$ has a complex  algebraic LNE model $Y$, then $Y$ also has genus $g$ and one end at infinity. On the other hand, by \cite[Theorem 8.1]{MVAlnecurves}, the number of points on intersection of the projective closure of $Y$ with the hyperplane at infinity is the degree of $Y$. Thus, $\deg Y=1$ and $Y$ is a complex line, which contradicts $g\geq1$.
			\end{proof}

				This still leaves  open a very interesting question whether  affine complex algebraic curves admit real algebraic LNE models.

			\bibliographystyle{alpha}
			\bibliography{AlgebraicModels}
			
		\end{document}